\newcommand{\tikzmark}[2][-3pt]{\tikz[remember picture, overlay, baseline=-0.5ex]\node[#1](#2){};}
\tikzset{brace/.style={decorate, decoration={brace}},
 brace mirrored/.style={decorate, decoration={brace,mirror}},
}
\newcounter{brace}
\newcounter{arrow}
\newcommand{\drawcurvedarrow}[3][]{%
 \refstepcounter{arrow}
 \tikz[remember picture, overlay]\draw (#2.center)edge[#1]node[coordinate,pos=0.5, name=arrow-\thearrow]{}(#3.center);
}
\numberwithin{equation}{section}
\def\bzero{{\mathbf 0}}
 \def\bC{{\mathbf C}}  
 \def\bS{{\mathbf S}} 
   \def\bI{{\mathbf I}}
\def\bR{{\mathbf R}} \def\bS{{\mathbf S}}  
  \def\bX{{\mathbb X}} \def\bY{{\mathbb Y}}
\def\bM{\bm M}
\def\br{{\mathbf r}}  
\def\bx{{\mathbf x}} \def\by{{\mathbf y}} 
\def\bbeta{{\bm{\beta}}}
\def\bSigma{{\bm{\Sigma}}}
\def\hbbeta{\widehat{\bm \beta}}
\def\argmin{\mathop{\rm arg\, min}}
\def\diag{\hbox{diag}}
 \def\hbeta{\widehat{\beta}}
\def\real{\mathop{{\rm I}\kern-.2em\hbox{\rm R}}\nolimits}
\def\1overn{\frac{1}{n}}
\def\bel{\begin{eqnarray}\label}  \def\eel{\end{eqnarray}}
\def\bes{\begin{eqnarray*}}  \def\ees{\end{eqnarray*}}
\newtheorem{theorem}{Theorem}[section]
\theoremstyle{definition}
\theoremstyle{remark}
\newtheorem{remark}[theorem]{Remark}
\begin{document}

\title[Restricted Bridge Estimation]{Penalized regression via the
restricted bridge estimator 
}


\author{Bahad{\i}r Y\"{u}zba\c{s}{\i}$^\dag$, Mohammad Arashi$^\ddag$ \and Fikri Akdeniz$^\S$}

\date{\today}
\maketitle

{\footnotesize
\center { \text{  $^\dag$Department of Econometrics}\par
  { \text{ Inonu University, Turkey}}\par
  { \texttt{E-mail address:b.yzb@hotmail.com}}
  \par

  \vskip 0.2 cm
  
  \text{  $^\ddag$Department of Statistics, Faculty of Mathematical Sciences}\par{\text{ Shahrood University of Technology, Iran}}\par
    { \texttt{E-mail address:m\_arashi\_stat@yahoo.com}}

  \vskip 0.2 cm
  
  \text{  $^\S$Department of Mathematics and Computer Science}\par
  { \text{Cag University, Turkey}}\par
  { \texttt{E-mail address :fikriakdeniz@gmail.com }}

}}

\renewcommand{\thefootnote}{}
\footnote{2010  {\it AMS Mathematics Subject Classification:}
62J05, 62J07. }

\footnote {Key words and phrases: Bridge Regression, Restriction Estimation, Machine Learning, Quadratic Approximation, Newton-Raphso, Variable Selection, Multicollinearity \par

 Corresponding author : Bahad{\i}r Y\"{u}zba\c{s}{\i}

}

\date{\today}

\begin{abstract}
This article is concerned with the Bridge Regression, which is a special family in penalized regression with penalty function $\sum_{j=1}^{p}|\beta_j|^q$ with $q>0$, in a linear model with linear restrictions. The proposed restricted bridge (RBRIDGE) estimator simultaneously estimates parameters and selects important variables when a prior information about parameters are available in either low dimensional or high dimensional case. Using local quadratic approximation, the penalty term can be approximated around a local initial values vector and the RBRIDGE estimator enjoys a closed-form expression which can be solved when $q>0$. Special cases of our proposal are the restricted LASSO ($q=1$), restricted RIDGE ($q=2$), and restricted Elastic Net ($1< q < 2$) estimators. We provide some theoretical properties of the RBRIDGE estimator under for the low dimensional case, whereas the computational aspects are given for both low and high dimensional cases.  An extensive Monte Carlo simulation study is conducted based on different prior pieces of information and the performance of the RBRIDGE estiamtor is compared with some competitive penalty estimators as well as the ORACLE. We also consider four real data examples analysis for comparison sake. The numerical results show that the suggested RBRIDGE estimator outperforms outstandingly when the prior is true or near exact. 

\end{abstract}

\maketitle

\section{Introduction}
\noindent
Under a linear regression setup, assume $\bbeta=(\beta_1,\ldots,\beta_p)^\top$ is the vector of regression coefficients. Further, assume $\bbeta$ is subjected to lie in a sub-space restriction with form 
\begin{equation}\label{eq-restriction}
\bR\bm\beta=\br,
\end{equation}
where $\bR$ is an $m\times p$ ($m<p$) matrix of constants and $\br$ is an $m$-vector of known prespecified constants. This restriction may be 
\begin{description}
    \item[(a)] a fact is known from theoretical or experimental considerations
    \item[(b)] a
    the hypothesis that may have to be tested, or
    \item[(c)] an artificially imposed condition to reduce or
    eliminate redundancy in the description of the model.
\end{description}
From a practical viewpoint, \cite{don1982restrictions} explained how accounting identities burden some exact restrictions on the endogenous variables in econometric models. \cite{xu2012stein} motivated the problem of estimating angles subject to summation. Recently \cite{kleyn2017preliminary} used a priori restriction present in labor and capital input in the estimation of Cobb-Douglass production function and inferred about the economic model using a preliminary testing approach.  

Estimation with restriction \eqref{eq-restriction} has been considered by many to decrease the mean squared error (MSE) of estimation and mean prediction error (MPE) in regression modeling. 
\cite{roozbeh2015shrinkage} extended restricted ridge and the follow-up shrinkage strategies for the partially linear models and \cite{roozbeh2016robust} considered the robust extension of the latter work in restricted partially linear models. 
\cite{tuacc2017variable} proposed a restricted LASSO in the restricted regression model, while \cite{norouzirad2018preliminary} developed shrinkage estimators using LASSO and proposed a restricted LASSO to decrease the mean prediction error of estimation compared to the LASSO of \cite{tibshirani1996regression}. In a recent study, \cite{saleh2018rank} compared restricted estimators with LASSO and ridge in rank regression. For an extensive overview of restricted estimation in regression modeling and related shrinkage techniques, we refer to \cite{rao2003linear}, \cite{saleh2006theory}, and \cite{radhakrishna2008linear}.

Our purpose here is sparse estimation with improving prediction accuracy, utilizing applying regularization techniques in regression modeling. However, the problem under study is different from the generalized LASSO (GLASSO) of \cite{tibshirani2011solution} and hence does not involve a penalty matrix. Simply we define restricted penalized estimator by imposing the sub-space restriction $\bR\bm\beta=\br$ to the estimation of the true parameter. We do not need any specific regularity assumption for the uniqueness as described by \cite{ali2019generalized} as in the GLASSO. To be more specific, we use the solution of the bridge regularization technique as the ``base estimator" and couple it with the specified restriction to obtain a closed-form restricted bridge estimator. As an instance, the restricted LASSO is a closed-form estimator based on the LASSO. Hence, our contribution has the following highlights:
\begin{description}
    \item[] Comparing to the existing methods (e.g., the GLASSO), the computational and temporal costs of our method are negligible.
    \item[] It can be easily extended to other regularization techniques for which the local quadratic approximation (LQA) of \cite{fan2001variable} can be applied for the penalty function, such as SCAD.
    \item[] It improves the prediction accuracy of the base estimator and hence decrease the MPE.
    \item[] It is consistent in estimation under the same regularity conditions as in the base estimator and is unique.
\end{description}
By the above description, the plan of this article is as follows. In Section~\ref{sec:SM}, we describe the linear model along with bridge regularization technique. The restricted bridge is defined in Section~\ref{sec:RB}, where we also derive its MSE and prove its consistency. Section~\ref{sec:sim} is devoted to two Monte Carlo simulation examples and the analysis of four real data examples are are given in Section~\ref{sec:RDA} for performance analysis of the proposed restricted bridge estimator. We conclude our study in Section~\ref{sec:conc}. Proofs of technical statements are given in the Appendix, to better focus on the computational part in the body of paper.

\section{Statistical Model}
\label{sec:SM}
\noindent
Consider the following linear model
\begin{equation}\label{full:model}
\bY=\bX_n \bm\beta+ \bm \varepsilon, 
\end{equation}
where $\bY=(Y_1,\ldots,Y_n)^\top$ is the vector of response variables, $\bX_n=(\bx_1,\ldots,\bx_n)^\top$ is the non-stochastic design matrix including $p$-dimensional covariates $\bx_i\in\mathbb{R}^p$, and $\bm\varepsilon=(\varepsilon_1,\ldots,\varepsilon_n)^\top$ is the error term with $\mathbb{E}(\bm\varepsilon)=\boldsymbol{0}$ and $\mathbb{E}(\bm\varepsilon\bm\varepsilon^\top)=\boldsymbol{I}_n$. We also assume the observations $\{(\bx_i,Y_i)\}_{i=1}^n$ are centered, so there is no intercept in the model and the focus of this study is the estimation of $\bm\beta$. 

The least squares (LS) estimator of $\bbeta$ is given by $\tilde{\bbeta}=\bC_n^{-1}\bX_n^\top\bY$, with $\bC_n = \bX_n^\top\bX_n$. Under LS theory, as $n\to\infty$, we get $\sqrt{n}(\tilde{\bbeta}-\bbeta)\overset{\mathcal{D}}{\to}N_p(\boldsymbol 0,\bC^{-1})$, where we assumed the following regularity conditions hold
\begin{enumerate}
    \item [(A1)] $\max_{1\leq i\leq n}\bx_i^\top \bC_n^{-1}\bx_i\to0$ as $n\to\infty$, where $\bx_i^\top$ is the ith row of $\bX_n$;
    \item [(A2)] $n^{-1}\bC_n\to\bC$, where $\bC$ is a finite positive definite (p.d.) matrix.
\end{enumerate}
Despite simplicity of the LS estimator and that is the best linear unbiased estimator, its efficiency diminishes, in the MSE sense, in multicollinear and sparse situations. In these scenarios, regularization techniques are used to penalize large values of true regression parameters. As such we refer to the ridge and LASSO estimation methods, where $\textnormal{L}_2$ and $\textnormal{L}_1$ norms are used for the penalty term, respectively. \cite{frank1993statistical} introduced a class of regularization techniques called bridge, for which the penalty term allowed to very on $(0,\infty)$, i.e., $\textnormal{L}_q$ norm with $q>0$. This class includes the ridge and LASSO as special members. The bridge estimator can be obtained by solving the following optimization problem
\begin{equation*}
    \min_{\bbeta\in\mathbb R^p}(\bY-\bX_n\bm\beta)^\top(\bY-\bX_n\bm\beta)\quad\textnormal{subject to}\quad\boldsymbol{1}_p^\top|\bm\beta|^q \leq t,\quad t\geq0,
\end{equation*}
where $\boldsymbol{1}_p$ is a $p$-tuple of 1's and $|\bm\beta|=(|\beta_1|,\ldots,|\beta_p|)^\top$. Consequently the bridge estimator is obtained by solving the dual form as
\begin{equation}
\hbbeta_n=\argmin_{\bbeta}\left\{\sum_{i=1}^{n}(Y_i-\bm x_i^\top\bm\beta)^2+\lambda\sum_{j=1}^{p}|\beta_j|^q\right\},
\end{equation}
where $\lambda$ is the tuning parameter and $q>0$.

\begin{figure}[!htbp]
\centering
\includegraphics[width=16cm,height=10cm]{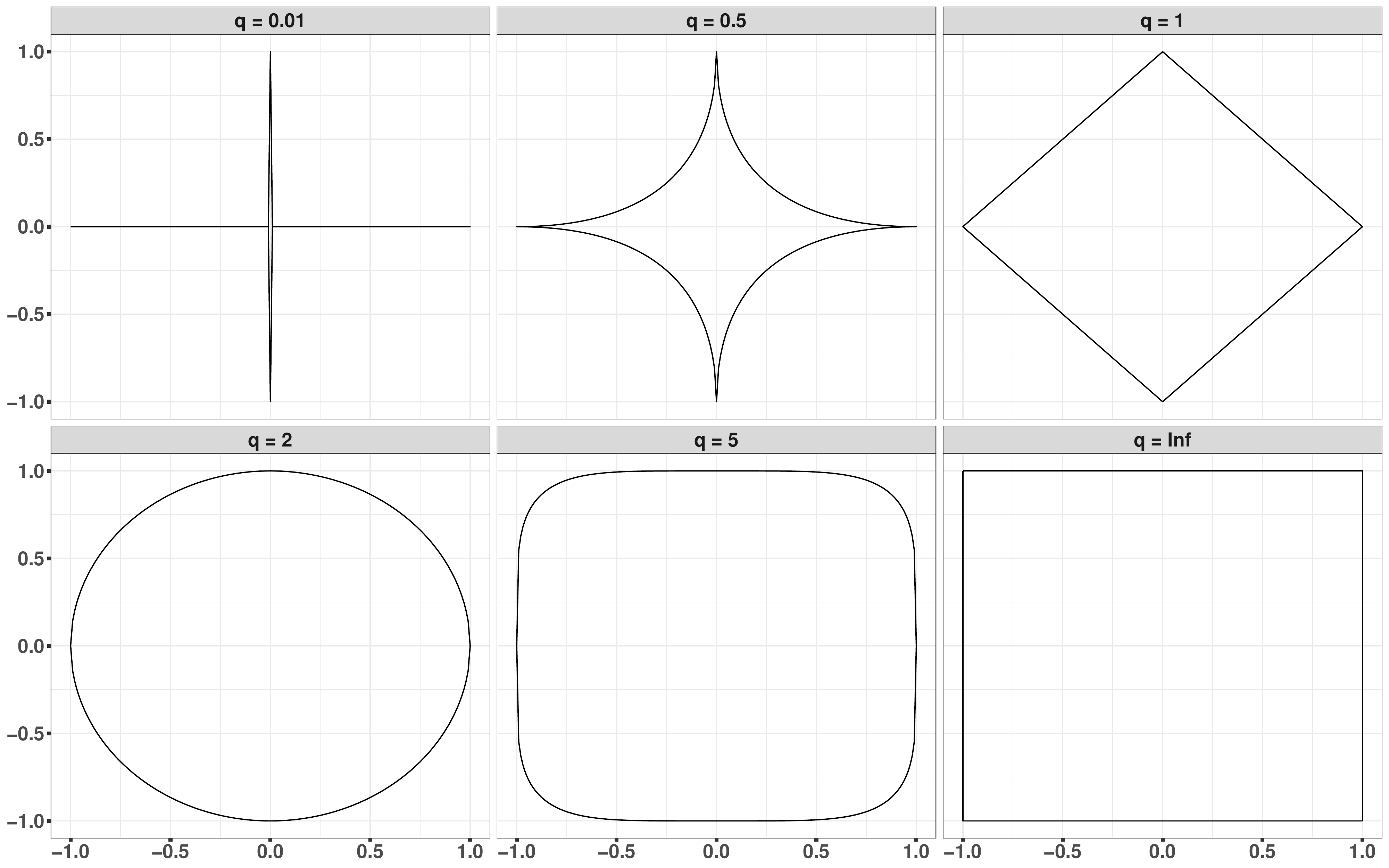}
\caption{Constrained area of the bridge estimator with $t=1$
 \label{Fig:2d_pen}}
\end{figure}

\begin{figure}[!htbp]
\centering
\includegraphics[width=16cm,height=10cm]{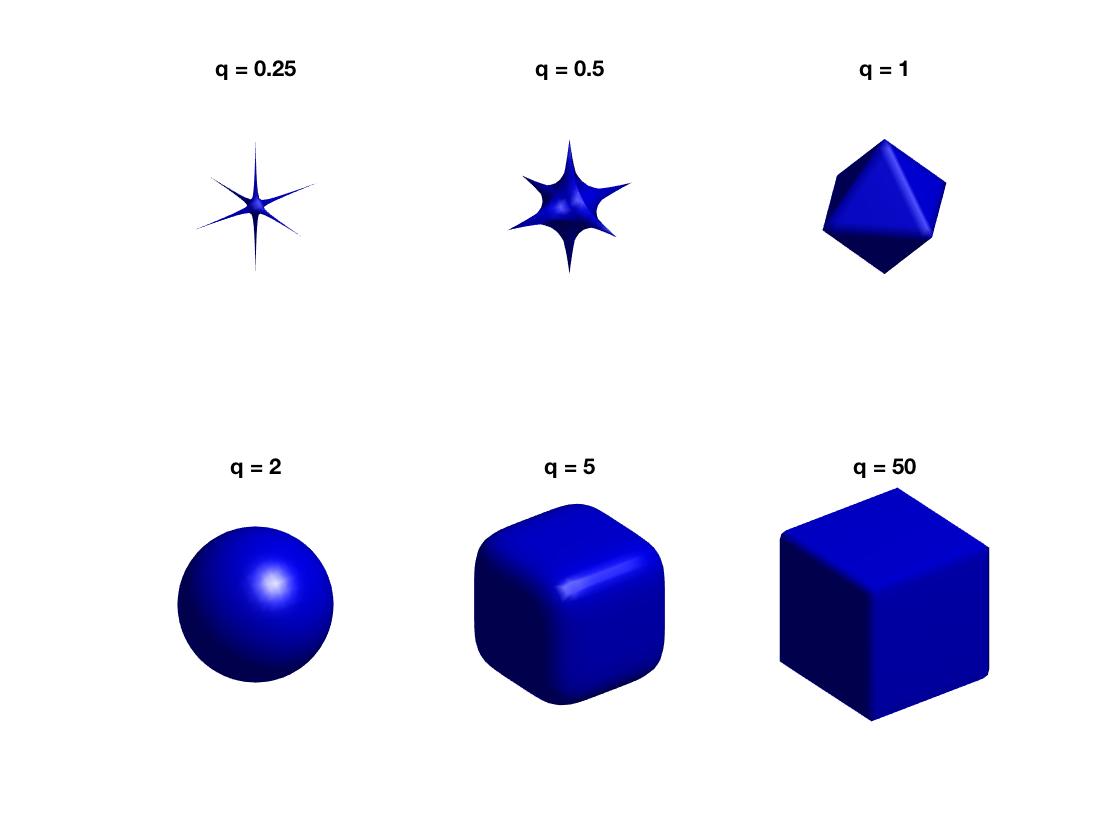}
\caption{Shapes of different norms in 3D
 \label{Fig:3d_pen}}
\end{figure}

Figure \ref{Fig:2d_pen} shows the constrained area of the bridge estimator with $t=1$. Apparently, as $q$ increases the constrained area of penalty term widen, while for $q\to0$, there is no penalty area and we get the LS solution, as we expect. 

Using the local quadratic approximation (LQA) of \cite{fan2001variable}, the penalty term can be approximated around a local vector $\bm\beta^o$ and the bridge estimator has the following closed form
\begin{equation}\label{eq-2.3}
\hbbeta_n=\left[\bC_n+\bSigma_\lambda(\hbbeta^o)\right]^{-1}\bX_n^\top\bY,
\end{equation} 
where $\bSigma_{\lambda}(\hbbeta^o) =\frac{\lambda q}{2} \diag\left( \left| \hat\beta_1^{o} \right|^{q-2},\dots,\left| \hat\beta_p^{o} \right|^{q-2}  \right) $. See \cite{park2011bridge}, for details.

\section{Restricted Estimator}
\label{sec:RB}
In order to couple the bridge estimator (termed as ``base estimator" in the Introduction) with the specified restriction \eqref{eq-restriction}, we solve the following optimization problem
\begin{eqnarray*}
&&\min_{\bbeta\in\mathbb R^p} (\bY-\bX_n\bm\beta)^\top(\bY-\bX_n\bm\beta)\cr
\textnormal{s.t.}&& \bbeta^\top\diag\left( \left| \hat\beta_1^{o} \right|^{q-2},\dots,\left| \hat\beta_p^{o} \right|^{q-2}  \right)\bbeta^\top\leq t,\;t>0\quad\mbox{and} \quad\bR\bm\beta=\br,
\end{eqnarray*}
where $(\beta^o_1,\ldots,\beta^0_p)$ are local points used in the LQA of \cite{fan2001variable}. The solution of the above problem reveals the restricted bridge (BRIDGE) estimator in a closed form, as stated in the following result. 
\begin{theorem}\label{theorem:restricted bridge}
Assume linear model \eqref{full:model}.    Under the sub-space restriction \eqref{eq-restriction}, the RBRIDGE estimator is given by
\begin{equation}\label{restricted bridge}
\hbbeta^{\rm R}_n=\hbbeta_n-(\bC+\bSigma_{\lambda}(\hbbeta^o))^{-1}\bR^\top\left[ \bR (\bC+\bSigma_{\lambda}(\hbbeta^o))^{-1}\bR^\top\right]^{-1} \left( \bR\hbbeta_n-\br \right). 
\end{equation}
\end{theorem}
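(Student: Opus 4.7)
The plan is to recognize that, after invoking the local quadratic approximation of \cite{fan2001variable}, the penalty $\lambda\sum_{j=1}^{p}|\beta_j|^q$ is replaced by the quadratic form $\bbeta^\top\bSigma_{\lambda}(\hbbeta^o)\bbeta$ (up to an additive constant that is irrelevant for the minimization). Thus the restricted bridge problem reduces to a quadratic program with a single linear equality constraint, namely
\begin{equation*}
\min_{\bbeta\in\mathbb{R}^p}\;(\bY-\bX_n\bbeta)^\top(\bY-\bX_n\bbeta)+\bbeta^\top\bSigma_{\lambda}(\hbbeta^o)\bbeta \quad\text{subject to}\quad\bR\bbeta=\br,
\end{equation*}
and the closed-form solution will follow from the standard Lagrangian method.

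First I would attach a vector Lagrange multiplier $\bm\mu\in\mathbb{R}^m$ to the equality constraint and write
\begin{equation*}
\mathcal{L}(\bbeta,\bm\mu)=(\bY-\bX_n\bbeta)^\top(\bY-\bX_n\bbeta)+\bbeta^\top\bSigma_{\lambda}(\hbbeta^o)\bbeta+2\bm\mu^\top(\bR\bbeta-\br).
\end{equation*}
Differentiating with respect to $\bbeta$ and setting the gradient to zero yields the stationarity condition
\begin{equation*}
\bigl(\bC_n+\bSigma_{\lambda}(\hbbeta^o)\bigr)\bbeta=\bX_n^\top\bY-\bR^\top\bm\mu,
\end{equation*}
which, on recalling from \eqref{eq-2.3} that $\hbbeta_n=(\bC_n+\bSigma_{\lambda}(\hbbeta^o))^{-1}\bX_n^\top\bY$, immediately gives
\begin{equation*}
\bbeta=\hbbeta_n-(\bC_n+\bSigma_{\lambda}(\hbbeta^o))^{-1}\bR^\top\bm\mu.
\end{equation*}

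Next I would enforce primal feasibility by left-multiplying the above identity by $\bR$ and using $\bR\bbeta=\br$, obtaining
\begin{equation*}
\bm\mu=\bigl[\bR(\bC_n+\bSigma_{\lambda}(\hbbeta^o))^{-1}\bR^\top\bigr]^{-1}(\bR\hbbeta_n-\br),
\end{equation*}
and substituting this back delivers the announced formula \eqref{restricted bridge}. The only delicate point, which I would address briefly, is well-posedness: since $\bC_n$ is positive semidefinite and $\bSigma_{\lambda}(\hbbeta^o)$ is diagonal with strictly positive entries for $\lambda>0$ and nonzero local values $\hat\beta_j^o$, the matrix $\bC_n+\bSigma_{\lambda}(\hbbeta^o)$ is positive definite; combined with the full row rank of $\bR$ (implicit from $m<p$ and the non-redundancy of the restriction), this ensures that the inner matrix $\bR(\bC_n+\bSigma_{\lambda}(\hbbeta^o))^{-1}\bR^\top$ is positive definite and hence invertible, and it also guarantees that the stationary point is the unique global minimizer by strict convexity of the objective on the affine constraint set.

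I do not anticipate any real obstacle beyond bookkeeping; the principal ``content'' of the theorem lies in recognizing the LQA-penalized objective as a quadratic form so that the constrained minimization admits the classical correction-term formula, entirely analogous to the restricted least squares estimator but with $\bC_n$ replaced by $\bC_n+\bSigma_{\lambda}(\hbbeta^o)$.
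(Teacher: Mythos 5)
Your proof is correct and follows essentially the same route as the paper's own argument: form the Lagrangian of the LQA-quadratic objective with a multiplier on $\bR\bbeta=\br$, solve the stationarity condition to express $\bbeta$ as $\hbbeta_n$ plus a correction involving the multiplier, enforce feasibility to identify the multiplier, and substitute back. Your added remarks on positive definiteness of $\bC_n+\bSigma_{\lambda}(\hbbeta^o)$ and the full row rank of $\bR$ (hence invertibility of the inner matrix and uniqueness of the minimizer) are a welcome bit of rigor that the paper leaves implicit.
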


\begin{theorem}\label{theorem:MSE}
    Under the assumptions of Theorem \ref{theorem:restricted bridge} and linear restriction \eqref{eq-restriction}, we have
\begin{equation*}
\mathrm{MSE}(\hat\bbeta_n^R)=\mathrm{tr}(\bM[\bS+\bSigma_\lambda(\hbbeta^o)\bbeta^\top\bbeta\bSigma_\lambda(\hbbeta^o)]\bM)
\end{equation*}
where 
$\bS=\bC_n+\bSigma_\lambda(\hbbeta^o)$ and $\bM=\bS^{-1}-\bS^{-1}\bR^\top(\bR\bS^{-1}\bR^\top)^{-1}\bR\bS^{-1}$.
\end{theorem}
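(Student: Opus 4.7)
The plan is to exploit the LQA closed form \eqref{eq-2.3} together with the linear restriction $\bR\bbeta=\br$ to rewrite $\hbbeta_n^R-\bbeta$ as a clean linear image of the noise plus a deterministic penalty bias; the MSE then collapses to a quadratic form in $\bM$ by symmetry.

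First, substituting $\br=\bR\bbeta$ into \eqref{restricted bridge} yields
\begin{equation*}
\hbbeta_n^R-\bbeta \;=\; (\bI_p - \bP)(\hbbeta_n-\bbeta), \qquad \bP := \bS^{-1}\bR^\top(\bR\bS^{-1}\bR^\top)^{-1}\bR,
\end{equation*}
with $\bS = \bC_n + \bSigma_\lambda(\hbbeta^o)$. A direct computation shows $\bP^2 = \bP$, and that both factorizations
\begin{equation*}
(\bI_p-\bP)\bS^{-1} = \bM \qquad\text{and}\qquad \bS^{-1}(\bI_p-\bP)^\top = \bM
\end{equation*}
hold, the second following from the first by transposition and the symmetry of $\bS$ and $\bM$. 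I would verify these identities up front, because together they imply the key collapse $(\bI_p-\bP)\bS^{-1}\bA\bS^{-1}(\bI_p-\bP)^\top = \bM\bA\bM$ for arbitrary $\bA$, which trivializes every subsequent matrix manipulation.

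Next, from \eqref{eq-2.3} and $\bY=\bX_n\bbeta+\bm\varepsilon$, together with $\bX_n^\top\bX_n = \bC_n = \bS - \bSigma_\lambda(\hbbeta^o)$, I obtain the decomposition
\begin{equation*}
\hbbeta_n - \bbeta \;=\; \bS^{-1}\bX_n^\top\bm\varepsilon \;-\; \bS^{-1}\bSigma_\lambda(\hbbeta^o)\bbeta,
\end{equation*}
a zero-mean stochastic term plus a deterministic penalty-induced bias. Applying $(\bI_p-\bP)$ and invoking $(\bI_p-\bP)\bS^{-1}=\bM$ gives
\begin{equation*}
\hbbeta_n^R - \bbeta \;=\; \bM\bX_n^\top\bm\varepsilon \;-\; \bM\bSigma_\lambda(\hbbeta^o)\bbeta.
\end{equation*}

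Finally, I would compute the MSE matrix by forming the outer product and taking expectation. Using $\mathbb{E}[\bm\varepsilon\bm\varepsilon^\top]=\bI_n$ and the vanishing of the cross term (by zero mean), the collapse identity above delivers
\begin{equation*}
\mathbb{E}\bigl[(\hbbeta_n^R-\bbeta)(\hbbeta_n^R-\bbeta)^\top\bigr] \;=\; \bM\bC_n\bM \;+\; \bM\bSigma_\lambda(\hbbeta^o)\bbeta\bbeta^\top\bSigma_\lambda(\hbbeta^o)\bM \;=\; \bM\bigl[\bC_n + \bSigma_\lambda(\hbbeta^o)\bbeta\bbeta^\top\bSigma_\lambda(\hbbeta^o)\bigr]\bM,
\end{equation*}
and taking the trace recovers the stated scalar MSE (up to bookkeeping of $\bS$ versus $\bC_n$ and the placement of the outer product $\bbeta\bbeta^\top$ in the theorem statement). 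The main obstacle is the matrix algebra around symmetry of $\bM$; once the two factorizations $(\bI_p-\bP)\bS^{-1}=\bM=\bS^{-1}(\bI_p-\bP)^\top$ are established, the MSE formula drops out by a one-line substitution.
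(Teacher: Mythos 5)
Your proof is correct and follows essentially the same route as the paper's: substitute $\br=\bR\bbeta$ into the closed form, split $\hbbeta_n-\bbeta$ into the noise term $\bS^{-1}\bX_n^\top\bveps$ and the penalty bias $-\bS^{-1}\bSigma_{\lambda}(\hbbeta^o)\bbeta$, and collapse everything through $(\bI_p-\bP)\bS^{-1}=\bM$. Your careful bookkeeping in fact corrects the paper, whose proof asserts $\mathrm{Cov}(\hbbeta_n^{\rm R})=\bM\bC_n\bM=\bM\bS\bM$ without justification: since $\bM\bS\bM=\bM$ while $\bM\bC_n\bM=\bM-\bM\bSigma_{\lambda}(\hbbeta^o)\bM$, these differ whenever $\bM\bSigma_{\lambda}(\hbbeta^o)\bM\neq\mathbf{0}$, so the displayed formula should read $\bC_n$ in place of $\bS$ (and $\bbeta\bbeta^\top$ in place of $\bbeta^\top\bbeta$), exactly as you derive.
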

In the following result, we give the consistency property of the RBRIDGE estimator. For our purpose we assume $\lambda$ is dependent to $n$ and let $\lambda=\lambda_n$. 
\begin{theorem}\label{theorem:Consistency}
    Under the assumptions of Theorem \ref{theorem:MSE}, the RBRIDGE estimator, with $\bS$ replaced by $\bS_n$, is consistent in estimation of $\bbeta$ if $\lambda_n=o(n)$, where 
    \begin{equation*}
    \bS_n=\frac1n\bC_n+\frac1n\bSigma_{\lambda_n}(\hbbeta^o).
    \end{equation*}
\end{theorem}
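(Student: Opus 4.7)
The approach is to exploit the closed-form expression \eqref{restricted bridge}, decompose $\hbbeta^{\rm R}_n-\bbeta$ into two transparent factors, and apply Slutsky-type arguments. Using the fact that the true parameter satisfies $\bR\bbeta=\br$, I would rewrite $\bR\hbbeta_n-\br=\bR(\hbbeta_n-\bbeta)$ inside \eqref{restricted bridge} to obtain the factorization
\begin{equation*}
\hbbeta^{\rm R}_n-\bbeta=\bigl[\bI_p-\bS_n^{-1}\bR^\top(\bR\bS_n^{-1}\bR^\top)^{-1}\bR\bigr]\,(\hbbeta_n-\bbeta).
\end{equation*}
Replacing $\bS=\bC_n+\bSigma_{\lambda_n}(\hbbeta^o)$ by $\bS_n=\tfrac{1}{n}\bS$ leaves this expression numerically invariant, since the $n$-factors in $\bS_n^{-1}$ and $(\bR\bS_n^{-1}\bR^\top)^{-1}$ cancel; the rescaling serves only to make the asymptotic limits exist. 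Consistency of $\hbbeta^{\rm R}_n$ then reduces to (i) consistency of the unrestricted bridge $\hbbeta_n$, and (ii) $O_p(1)$-boundedness of the bracketed oblique-projection matrix.

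For (ii), assumption (A2) gives $\tfrac{1}{n}\bC_n\to\bC$, while $\lambda_n=o(n)$ together with boundedness in probability of the LQA weights $|\hat\beta_j^o|^{q-2}$ yields $\tfrac{1}{n}\bSigma_{\lambda_n}(\hbbeta^o)=o_p(1)$, so $\bS_n\overset{p}{\to}\bC$; by the continuous mapping theorem the bracket converges to the deterministic bounded matrix $\bI_p-\bC^{-1}\bR^\top(\bR\bC^{-1}\bR^\top)^{-1}\bR$. For (i), plugging $\bY=\bX_n\bbeta+\bm\varepsilon$ into \eqref{eq-2.3} and rearranging yields
\begin{equation*}
\hbbeta_n-\bbeta=\bS_n^{-1}\Bigl[\tfrac{1}{n}\bX_n^\top\bm\varepsilon-\tfrac{1}{n}\bSigma_{\lambda_n}(\hbbeta^o)\bbeta\Bigr].
\end{equation*}
The stochastic term has mean zero and variance $\tfrac{1}{n^2}\bC_n=O(1/n)$, hence is $o_p(1)$; the bias term is $o_p(1)$ by the same weight-control argument used in (ii). With $\bS_n^{-1}=O_p(1)$, this gives $\hbbeta_n-\bbeta=o_p(1)$, and combining with (ii) via Slutsky closes the argument.

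The main obstacle is the control of $\tfrac{1}{n}\bSigma_{\lambda_n}(\hbbeta^o)$ when $q<2$, because the weights $|\hat\beta_j^o|^{q-2}$ blow up on any coordinate where the initial estimate $\hat\beta_j^o$ is near zero. The cleanest remedy is to take $\hbbeta^o$ to be a $\sqrt{n}$-consistent warm start (for instance the LS or ridge estimator) whose active components stay bounded away from zero with probability tending to one, and to bound the contribution of the remaining coordinates by absorbing them into the $\lambda_n/n\to 0$ factor; this is precisely the LQA asymptotic regime analyzed by \cite{park2011bridge}. Once that weight control is established, everything else is routine matrix algebra and Slutsky.
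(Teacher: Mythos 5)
Your proof is correct and reaches the same conclusion by the same overall architecture --- consistency of the unrestricted bridge plus a Slutsky/continuous-mapping step for the restriction correction --- but it differs from the paper in the key lemma used for the first ingredient. The paper establishes consistency of the base estimator by invoking Theorem~1 of \cite{knight2000asymptotics} for the exact bridge minimizer $\dot{\bbeta}_n$, then simply asserts $\bS_n\to\bC$ and passes to the limit in the closed form; under $\bR\bbeta=\br$ the correction term $\bC^{-1}\bR^\top(\bR\bC^{-1}\bR^\top)^{-1}(\bR\bbeta-\br)$ vanishes. You instead work entirely with the LQA closed form \eqref{eq-2.3}, deriving $\hbbeta_n-\bbeta=\bS_n^{-1}\bigl[\tfrac{1}{n}\bX_n^\top\bm\varepsilon-\tfrac{1}{n}\bSigma_{\lambda_n}(\hbbeta^o)\bbeta\bigr]$ and controlling each piece directly. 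This is arguably more faithful to the estimator actually being analyzed, since the RBRIDGE of Theorem~\ref{theorem:restricted bridge} is built from the LQA solution rather than the exact $L_q$ argmin, a mismatch the paper's citation of Knight--Fu quietly elides. Your oblique-projection factorization $\hbbeta^{\rm R}_n-\bbeta=[\bI_p-\bS_n^{-1}\bR^\top(\bR\bS_n^{-1}\bR^\top)^{-1}\bR](\hbbeta_n-\bbeta)$ and the observation that the $1/n$ rescaling is numerically invariant are both correct and make the limit computation cleaner than the paper's. You also explicitly flag the one genuine gap shared by both arguments: the claim $\tfrac{1}{n}\bSigma_{\lambda_n}(\hbbeta^o)\to 0$ requires the LQA weights $|\hat\beta_j^o|^{q-2}$ to stay bounded in probability, which fails for $q<2$ on coordinates where the warm start vanishes; the paper asserts $\bS_n\to\bC$ without comment, whereas you name the problem and sketch the standard remedy (a $\sqrt{n}$-consistent initializer with the near-zero coordinates absorbed into the $\lambda_n/n\to 0$ factor, as in \cite{park2011bridge}). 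In short, your route is self-contained where the paper's is citation-based, and more careful on the weight-control point.
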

Since the RBRIDGE estimator has closed form, it is relatively simple to be computed. However, we use the LQA method in the body of optimization problem and hence we need to explain about the computation of RBRIDGE estimator. This task is taken care in the next section.   
\subsection{Computation of RBRIDGE estimator}
Here, we briefly will outline the building block of our algorithm for computing the RBRIDGE estimator $\hbbeta_n^{\rm R}$. 

With the aid of LQA, we derived a closed-form restricted bridge estimator (see Theorem \ref{theorem:restricted bridge}). For a fast and high-performance computational algorithm, we specifically use the RcppArmadillo language of \cite{eddelbuettel2014rcpparmadillo}. For the selection of the tuning parameter $\lambda$, we use the following Cross-Validation (CV) method: 
\begin{itemize}
    \item Divide the data into $K$ roughly equal parts.
    \item For each $k=1,2,\dots,K$, compute the target estimator $\hbbeta_n^{-k}$, say using the rest of $K-1$ parts of the data.
    \item Compute the mean prediction error for the $k^{th}$ cycle by
    $$\mathrm{PE}_k(\lambda) = \frac{1}{n}\sum_{i\in k^{th} \mathrm{part}}(Y_i-\bm x_i^\top\hbbeta_n^{-k})^2$$.
    \item Finally, for many values of $\lambda$ calculate $\mathrm{CV_{\lambda}} = \frac{1}{K}\sum_{i=1}^K{\mathrm{PE}_k(\lambda)}$ and choose the $\lambda$ for which the smallest $\mathrm{CV_{\lambda}}$ is achieved.
\end{itemize}
In our computation, since the objective function (see the proof of Theorem \ref{theorem:restricted bridge}) is convex, $\hbbeta_n^{\rm R}$ satisfies the necessary and sufficient Karush-Kuhn-Tucker (KKT) conditions, and hence we do not necessarily need them to be discussed here. Algorithm 1 below clarifies the steps for the whole computation procedure.
\begin{algorithm}[h]
    \caption{Details of the RBRIDGE algorithm}
    \label{algorithm}
    \begin{algorithmic}
    \item[Step 1.] Fix the values $q$ and $\lambda$.
    \item[Step 2.] Set the initial value $\hbbeta^o$. 
    \item[Step 3.] Set the matrices $\bR$ and $\br$ and compute
    \begin{eqnarray*}
    (\hbbeta^{\rm R}_n)^{(t)}=(\hbbeta_n)^{(t-1)}-(\bC+\bSigma_{\lambda}(\hbbeta^{t-1}))^{-1}\bR^\top\left[ \bR (\bC+\bSigma_{\lambda}(\hbbeta^{(t-1)}))^{-1}\bR^\top\right]^{-1} \left( \bR(\hbbeta_n)^{(t-1)}-\br \right), 
    \end{eqnarray*}
    where $(\hbbeta_n)^{(t-1)}=\left[\bC_n+\bSigma_\lambda(\hbbeta^{t-1})\right]^{-1}\bX_n^\top\bY$ and $\bSigma_{\lambda}(\hbbeta^{(t-1)}) =\frac{\lambda q}{2} \diag\left( \left| \hat\beta_1^{(t-1)} \right|^{q-2},\dots,\left| \hat\beta_p^{(t-1)} \right|^{q-2}  \right)$ for $t=1,2,\dots$. The algorithm stops when $\left\|(\hbbeta^{\rm R}_n)^{(t)}-(\hbbeta^{\rm R}_n)^{(t-1)}  \right\|<\eta$. During the iteration if $\left| \hat\beta_j^{(t-1)} \right| < \eta$, then we delete $j^{th}$ variable and exclude it from final model to make algorithm stable.
\end{algorithmic}
\end{algorithm}
\begin{remark}
In Algorithm 1, we used the ridge coefficients for Step 2 and $\eta=10^{-7}$.
\end{remark}
\begin{remark}
Algorithm 1 is just for the computation of RBRIDGE estimator. In our numerical studies, we will also consider the estimation of \ref{eq-2.3} following \cite{park2011bridge} for comparison sake. 
\end{remark}

\section{Simulation}
\label{sec:sim}
 We generated response from the following model
\begin{equation*}
y_i = \bx^{\top}_i\bbeta+\sigma\epsilon_i, \quad 1\le i \le n,      
\end{equation*}
where $\bx_i \in \mathbb{R}^p$ are zero mean multivariate normal random vectors with correlation matrix $\bSigma=(\Sigma_{ij})$ with $\Sigma_{ij}=\rho^{|i-j|}$ and $\epsilon_i$ are standard normal. We consider $\sigma=1,3$ and $\rho=0.5,0.9$. There are two examples as follows:

\begin{itemize}[align=left]

\item[Ex 1] Following \cite{fan2001variable,tibshirani1996regression}, we consider the true parameters as $\bbeta = (3, 1.5, 0, 0, 2, 0, 0, 0)^{\top}$ and generated data sets consisting of $n\in\{40,60\}$ observations. For the rest of simulation procedure we consider the following four scenarios about the specification of $\bR$ and $\br$ matrices.  
\subitem Case 1- Let $\bR = \left[ 1,1,0,0,1,0,0,0 \right] $ and $\br = \left[ 6.5 \right] $, that is, $\beta_1+\beta_2+\beta_5 = 6.5$. 
\subitem Case 2- Let  $\bR = \left[ -1,1,0,0,1,0,0,0 \right] $ and $\br = \left[ 0.5 \right] $, that is, $\beta_2+\beta_5 = \beta_1 + 0.5$. 
\subitem Case 3- We consider both cases (i) and (ii) simultaneously, that is, \linebreak $\bR = \left[ \begin{matrix}  1&1&0&0&1&0&0&0 \\  -1&1&0&0&1&0&0&0  \end{matrix} \right] $ and 
$  \br = \begin{bmatrix}
  6.5 \\
  0.5 \\
  \end{bmatrix}$.
\subitem Case 4- Let $\bbeta =  \left( \bbeta_1^\top,\bbeta_2^\top \right)^\top$, where $\bbeta_1$ presents the vector of non-zero variables of $\bbeta$ while $\bbeta_2$ presents zeros. As a special case of general form of null hypothesis $\bR\bbeta = \br$, we consider $\bR = \left[\begin{matrix} 0 & 0 & 1 & 0 & 0 & 0 & 0 & 0 \\ 0 & 0 & 0 & 1 & 0 & 0 & 0 & 0 \\ 0 & 0 & 0 & 0 & 0 & 1 & 0 & 0 \\ 0 & 0 & 0 & 0 & 0 & 0 & 1 & 0 \\ 0 & 0 & 0 & 0 & 0 & 0 & 0 & 1 \end{matrix}\right]$ and $\br = \begin{bmatrix}
  0 \\
  0 \\
  0 \\
  0 \\
  0 \\
  \end{bmatrix}$, that is, $\bbeta_2=\begin{bmatrix}
  0 \\
  0 \\
  0 \\
  0 \\
  0 \\
  \end{bmatrix}$.
\item[Ex 2] This example is devoted to the situations where the number of co-variates is larger than the number of observations, which We only consider $n=50$ and $p=(100,200)$. The vector of true parameters is then taken as follows:
\begin{equation*}
\bbeta =\left( \underbrace { 0,\dots,0}_{10},\underbrace {2,\dots,2}_{10}, \underbrace { 0,\dots,0}_{10},
\underbrace { -2,\dots,-2}_{10},\underbrace {0,\dots,0}_{p-40}\right)^\top. 
\end{equation*}
Let again $\bbeta =  \left( \bbeta_1^\top,\bbeta_2^\top \right)^\top$, where $\bbeta_1$ presents the vector of non-zero variables of $\bbeta$ while $\bbeta_2$ presents zeros. Hence, we consider the following four cases.

\subitem{Case 1- } We consider an $\bR=[\bzero,\bI]$ matrix where $\bzero$ and $\bI$ are suitable sizes zero and identity matrices, respectively, such that $\bbeta_2 = \br = {\bf 1}_{p-20}^{\top}\cdot 0$.
\subitem{Case 2- } Similar to the Case 1, except $\br = {\bf 1}_{p-20}^{\top}\cdot 0.1$. In this case, we investigate violations of sub-model in Case 1.

\subitem{Case 3- } Let $\bR=[\bI,\bzero]$, where $\bzero$ and $\bI$ are suitable sizes zero and identity matrices, respectively, such that $\bbeta_1 = \br = ({\bf 1}_{10}\cdot 2, {\bf 1}_{10}\cdot -2)^{\top}$.
\subitem{Case 4- } This is similar to Case 3, except that $\br = ({\bf 1}_{10}\cdot 2.1, {\bf 1}_{10}\cdot -2.1)^{\top}$. In this case, we investigate violations of sub-model in Case 3.
\end{itemize}

We also use the following criteria to asses the numerical performance:
\begin{itemize}[align=left]
\item[\bf MME] presents the median of the model error (ME) measure of an estimator, where ${\rm ME}=(\hbbeta-\bbeta)^{\top}\Sigma(\hbbeta-\bbeta)$.
\item[\bf C] shows the average number of zero coefficients correctly estimated to be zero.
\item[\bf IC] shows the average number of nonzero coefficients incorrectly estimated to be zero.
\item[\bf U-fit] (Under fit) shows the proportion of excluding any significant variables.
\item[\bf C-fit] (Correct fit) presents the probability of selecting the exact subset
model.
\item[\bf O-fit] (Over fit) shows the probability of including all three significant variables and some noise variables.
\end{itemize}

In our simulation study, we compare the performance of the RBRIDGE estimators with LASSO, RIDGE, Elastic Net (E-NET), SCAD, and ORACLE, which the latter is the ordinary least squares estimator of the true model, i.e., $y=\beta_1x_1+\beta_2x_2+\beta_4x_4$ for Example 1. For better specification the E-NET penalty term has form
$\alpha \beta^2 + (1-\alpha)|\beta|$.
So, in our numerical analysis we consider LASSO($\alpha=1$), RIDGE($\alpha=0$) and E-NET($\alpha=0.2,0.4,0.6,0.8$ which gives minimum prediction error) using the {\it glmnet} package in R. For the SCAD we use the {\it ncvreg} package in R. Note that both the BRIDGE and RBRIDGE estimators are calculated using the {\it rbridge} package which will be appeared online soon. Also, a grid of values for $q$, from $0.25$ to $2$ with $0.25$ increment is taken, in addition to the one which gives the minimum measurement error.

\begin{table}[H]
\caption{Simulation Results for Example 1
\label{tab:sim:ex1}}
\centering
\begin{adjustbox}{width=.95\textwidth}
\begin{tabular}{lcccccccccccc}
\toprule
\multicolumn{1}{c}{ } & \multicolumn{6}{c}{$n=40$, $\sigma=1$, $\rho=0.5$} & \multicolumn{6}{c}{$n=40$, $\sigma=1$, $\rho=0.9$} \\
\cmidrule(l{3pt}r{3pt}){2-7} \cmidrule(l{3pt}r{3pt}){8-13}
  & MME & C & IC & U-fit & C-fit & O-fit & MME & C & IC & U-fit & C-fit & O-fit\\
\midrule
LASSO & 0.154 & 2.768 & 0.000 & 0.000 & 0.118 & 0.882 & 0.461 & 2.888 & 0.000 & 0.000 & 0.102 & 0.898\\
RIDGE & 0.296 & 0.000 & 0.000 & 0.000 & 0.000 & 1.000 & 1.175 & 0.000 & 0.000 & 0.000 & 0.000 & 1.000\\
E-NET & 0.174 & 2.104 & 0.000 & 0.000 & 0.068 & 0.932 & 0.567 & 2.298 & 0.000 & 0.000 & 0.046 & 0.954\\
SCAD & 0.082 & 4.444 & 0.000 & 0.000 & 0.756 & 0.244 & 0.554 & 3.970 & 0.172 & 0.170 & 0.420 & 0.410\\
ORACLE & 0.064 & 5.000 & 0.000 & 0.000 & 1.000 & 0.000 & 0.156 & 5.000 & 0.000 & 0.000 & 1.000 & 0.000\\
BRIDGE & 0.097 & 3.230 & 0.000 & 0.000 & 0.000 & 1.000 & 0.437 & 3.118 & 0.016 & 0.016 & 0.152 & 0.832\\
RBRIDGE$^1$ & 0.160 & 2.358 & 0.000 & 0.000 & 0.268 & 0.732 & 0.551 & 1.706 & 0.010 & 0.010 & 0.156 & 0.834\\
RBRIDGE$^2$ & 0.103 & 3.154 & 0.000 & 0.000 & 0.000 & 1.000 & 0.172 & 3.220 & 0.000 & 0.000 & 0.022 & 0.978\\
RBRIDGE$^3$ & 0.166 & 1.576 & 0.000 & 0.000 & 0.126 & 0.874 & 0.298 & 1.290 & 0.000 & 0.000 & 0.056 & 0.944\\
RBRIDGE$^4$ & 0.063 & 5.000 & 0.000 & 0.000 & 1.000 & 0.000 & 0.170 & 5.000 & 0.000 & 0.000 & 1.000 & 0.000\\
\addlinespace
\multicolumn{1}{c}{ } & \multicolumn{6}{c}{$n=40$, $\sigma=3$, $\rho=0.5$} & \multicolumn{6}{c}{$n=40$, $\sigma=3$, $\rho=0.9$} \\
\cmidrule(l{3pt}r{3pt}){2-7} \cmidrule(l{3pt}r{3pt}){8-13}
  & MME & C & IC & U-fit & C-fit & O-fit & MME & C & IC & U-fit & C-fit & O-fit\\
\midrule
LASSO & 1.374 & 2.796 & 0.034 & 0.034 & 0.122 & 0.844 & 3.615 & 2.950 & 0.416 & 0.386 & 0.082 & 0.532\\
RIDGE & 1.783 & 0.000 & 0.000 & 0.000 & 0.000 & 1.000 & 3.251 & 0.000 & 0.000 & 0.000 & 0.000 & 1.000\\
E-NET & 1.455 & 2.186 & 0.018 & 0.018 & 0.068 & 0.914 & 3.461 & 2.354 & 0.258 & 0.246 & 0.040 & 0.714\\
SCAD & 1.770 & 3.352 & 0.172 & 0.166 & 0.166 & 0.668 & 7.120 & 3.622 & 1.110 & 0.804 & 0.020 & 0.176\\
ORACLE & 0.579 & 5.000 & 0.000 & 0.000 & 1.000 & 0.000 & 1.407 & 5.000 & 0.000 & 0.000 & 1.000 & 0.000\\
BRIDGE & 1.697 & 2.988 & 0.086 & 0.086 & 0.222 & 0.692 & 3.926 & 2.198 & 0.462 & 0.384 & 0.076 & 0.540\\
RBRIDGE$^1$ & 0.682 & 1.162 & 0.022 & 0.022 & 0.116 & 0.862 & 1.297 & 1.500 & 0.182 & 0.180 & 0.076 & 0.744\\
RBRIDGE$^2$ & 0.820 & 3.272 & 0.010 & 0.010 & 0.184 & 0.806 & 4.218 & 2.754 & 0.192 & 0.190 & 0.254 & 0.556\\
RBRIDGE$^3$ & 0.339 & 0.990 & 0.000 & 0.000 & 0.080 & 0.920 & 0.592 & 1.486 & 0.054 & 0.054 & 0.130 & 0.816\\
RBRIDGE$^4$ & 0.563 & 5.000 & 0.006 & 0.006 & 0.994 & 0.000 & 1.512 & 5.000 & 0.048 & 0.048 & 0.952 & 0.000\\
\addlinespace
\multicolumn{1}{c}{ } & \multicolumn{6}{c}{$n=60$, $\sigma=1$, $\rho=0.5$} & \multicolumn{6}{c}{$n=60$, $\sigma=1$, $\rho=0.9$} \\
\cmidrule(l{3pt}r{3pt}){2-7} \cmidrule(l{3pt}r{3pt}){8-13}
  & MME & C & IC & U-fit & C-fit & O-fit & MME & C & IC & U-fit & C-fit & O-fit\\
\midrule
LASSO & 0.094 & 2.696 & 0.000 & 0.000 & 0.102 & 0.898 & 0.267 & 2.916 & 0.000 & 0.000 & 0.086 & 0.914\\
RIDGE & 0.211 & 0.000 & 0.000 & 0.000 & 0.000 & 1.000 & 1.046 & 0.000 & 0.000 & 0.000 & 0.000 & 1.000\\
E-NET & 0.104 & 2.146 & 0.000 & 0.000 & 0.056 & 0.944 & 0.328 & 2.432 & 0.000 & 0.000 & 0.050 & 0.950\\
SCAD & 0.050 & 4.480 & 0.000 & 0.000 & 0.770 & 0.230 & 0.178 & 4.270 & 0.050 & 0.050 & 0.634 & 0.316\\
ORACLE & 0.039 & 5.000 & 0.000 & 0.000 & 1.000 & 0.000 & 0.098 & 5.000 & 0.000 & 0.000 & 1.000 & 0.000\\
BRIDGE & 0.057 & 3.114 & 0.000 & 0.000 & 0.000 & 1.000 & 0.188 & 3.220 & 0.004 & 0.004 & 0.070 & 0.926\\
RBRIDGE$^1$ & 0.108 & 2.684 & 0.000 & 0.000 & 0.338 & 0.662 & 0.353 & 1.842 & 0.002 & 0.002 & 0.172 & 0.826\\
RBRIDGE$^2$ & 0.070 & 3.008 & 0.000 & 0.000 & 0.000 & 1.000 & 0.092 & 3.190 & 0.000 & 0.000 & 0.014 & 0.986\\
RBRIDGE$^3$ & 0.122 & 1.626 & 0.000 & 0.000 & 0.140 & 0.860 & 0.203 & 1.434 & 0.000 & 0.000 & 0.088 & 0.912\\
RBRIDGE$^4$ & 0.040 & 5.000 & 0.000 & 0.000 & 1.000 & 0.000 & 0.103 & 5.000 & 0.000 & 0.000 & 1.000 & 0.000\\
\addlinespace
\multicolumn{1}{c}{ } & \multicolumn{6}{c}{$n=60$, $\sigma=3$, $\rho=0.5$} & \multicolumn{6}{c}{$n=60$, $\sigma=3$, $\rho=0.9$} \\
\cmidrule(l{3pt}r{3pt}){2-7} \cmidrule(l{3pt}r{3pt}){8-13}
  & MME & C & IC & U-fit & C-fit & O-fit & MME & C & IC & U-fit & C-fit & O-fit\\
\midrule
LASSO & 0.851 & 2.696 & 0.004 & 0.004 & 0.102 & 0.894 & 2.266 & 2.864 & 0.216 & 0.208 & 0.076 & 0.716\\
RIDGE & 1.095 & 0.000 & 0.000 & 0.000 & 0.000 & 1.000 & 2.497 & 0.000 & 0.000 & 0.000 & 0.000 & 1.000\\
E-NET & 0.907 & 2.246 & 0.002 & 0.002 & 0.058 & 0.940 & 2.356 & 2.346 & 0.142 & 0.136 & 0.032 & 0.832\\
SCAD & 0.918 & 3.478 & 0.068 & 0.068 & 0.246 & 0.686 & 3.822 & 3.676 & 0.838 & 0.686 & 0.062 & 0.252\\
ORACLE & 0.355 & 5.000 & 0.000 & 0.000 & 1.000 & 0.000 & 0.883 & 5.000 & 0.000 & 0.000 & 1.000 & 0.000\\
BRIDGE & 0.844 & 3.110 & 0.030 & 0.030 & 0.146 & 0.824 & 2.744 & 2.240 & 0.304 & 0.272 & 0.132 & 0.596\\
RBRIDGE$^1$ & 0.562 & 1.358 & 0.004 & 0.004 & 0.124 & 0.872 & 0.901 & 1.254 & 0.124 & 0.124 & 0.076 & 0.800\\
RBRIDGE$^2$ & 0.466 & 3.182 & 0.000 & 0.000 & 0.056 & 0.944 & 2.495 & 2.860 & 0.098 & 0.098 & 0.244 & 0.658\\
RBRIDGE$^3$ & 0.265 & 1.004 & 0.000 & 0.000 & 0.056 & 0.944 & 0.380 & 1.366 & 0.020 & 0.020 & 0.106 & 0.874\\
RBRIDGE$^4$ & 0.351 & 5.000 & 0.000 & 0.000 & 1.000 & 0.000 & 0.965 & 5.000 & 0.010 & 0.010 & 0.990 & 0.000\\
\bottomrule
\end{tabular}
\end{adjustbox}
\begin{flushleft}                           
\footnotesize{$^1$the restrictions in case 1, $^2$the restrictions in case 2,
$^3$the restrictions in case 3, $^4$the restrictions in case 4.}
\end{flushleft}
\end{table}

In Table~\ref{tab:sim:ex1}, we report the the result obtained from the simulation study of Example 1 and briefly summarize as follows: if the sample size is small, the noise level is low and the the effect of multicollinearity is moderate, the SCAD performs well compared to the LASSO, RIDGE and E-NET and it reduces both ME and model complexity (MC). If the noise level is high with same the $n$ and $\rho$, the LASSO becomes better which is consistent with the results of \cite{fan2001variable}. Apart from their results, we notice that SCAD loses its efficiency in terms of both the ME and MC as $\rho$ gets large. RIDGE may only reduce ME and does not reduce MC since it does not shrink coefficients to zero. The E-NET has advantage compared to the RIDGE since it shrinks coefficients to zero even if it has larger ME compared to the RIDGE. With this analysis in hand, we now concentrate on performance evaluation of the proposed $L_q$-type estimation. We first note that the performance of the RBRIDGE$^4$ is mostly close to the ORACLE which means it reduces both the ME and MC. When the noise level is low, the RBRIDGE$^2$, which is the Case 2 of restrictions, has better performance compared to Cases 1 and 3. On the other hand, as the noise level gets larger, the RBRIDGE$^3$ that is estimated based on the combination of both the restrictions in Cases 1 \& 2 reduces the ME more than that of the RBRIDGE$^1$ and RBRIDGE$^2$. Also, the MMEs of the proposed estimators become generally smaller as $n$ increases.

\begin{figure}[H]
\centering
\includegraphics[width=16cm,height=16cm]{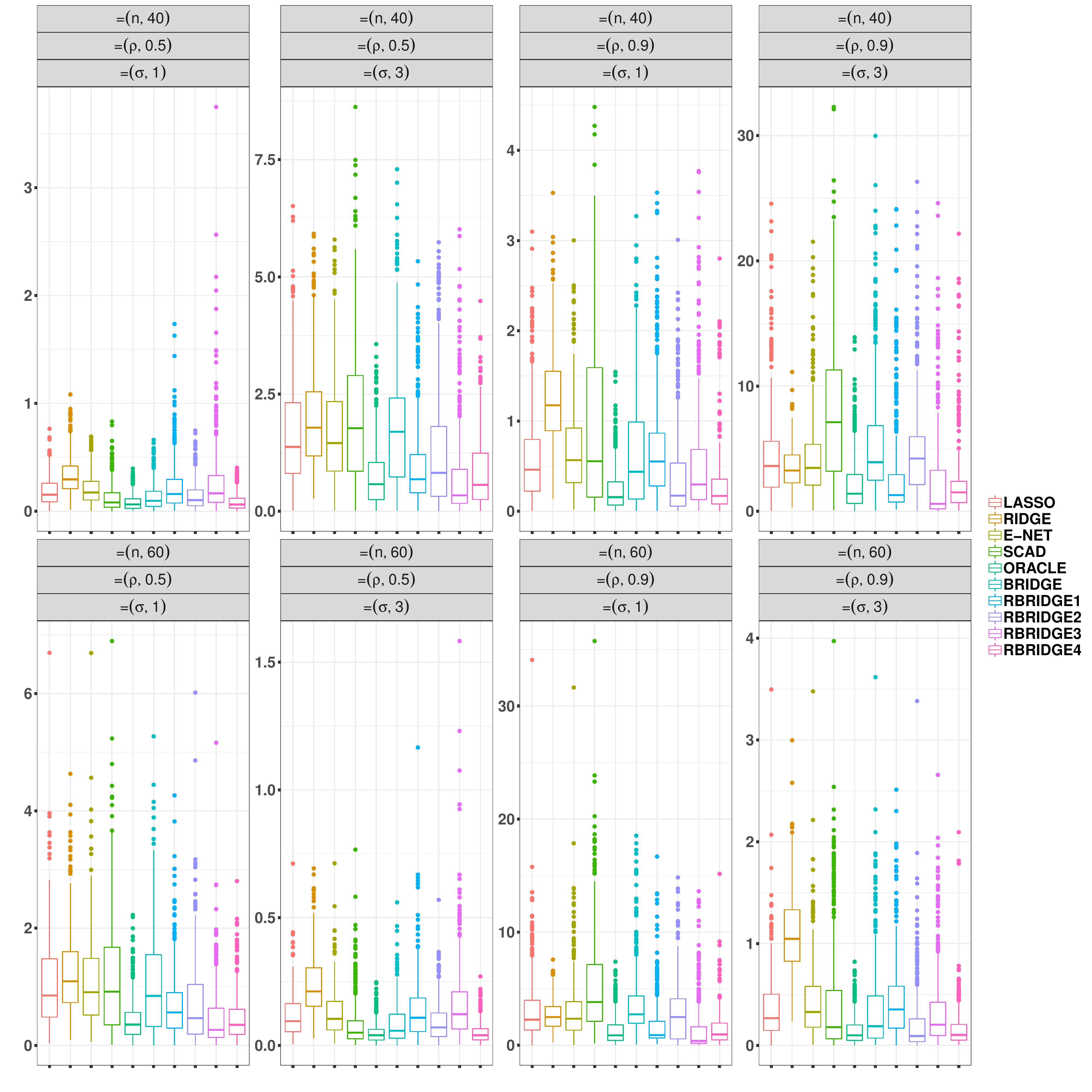}
\caption{Boxplots of the MEs for Example 1
 \label{Fig:boxplot1:ex1}}
\end{figure}

Among all, the RBRIDGE$^4$ is preferred with respect to the others measures, C, IC, U-fit, C-fit, and O-fit.

\begin{table}[H]
\caption{Simulation Results for Example 2
\label{tab:sim:ex2}}
\centering
\begin{adjustbox}{width=.95\textwidth}
\begin{tabular}{lrrrrrrrrrrrr}
\toprule
\multicolumn{1}{c}{ } & \multicolumn{6}{c}{$p=100$, $\sigma=1$, $\rho=0.5$} & \multicolumn{6}{c}{$p=100$, $\sigma=1$, $\rho=0.9$} \\
\cmidrule(l{3pt}r{3pt}){2-7} \cmidrule(l{3pt}r{3pt}){8-13}
  & MME & C & IC & U-fit & C-fit & O-fit & MME & C & IC & U-fit & C-fit & O-fit\\
\midrule
LASSO & 3.091 & 63.74 & 0.08 & 0.06 & 0.0 & 0.94 & 1.520 & 74.46 & 0.12 & 0.12 & 0.00 & 0.88\\
RIDGE & 114.298 & 0.00 & 0.00 & 0.00 & 0.0 & 1.00 & 127.445 & 0.00 & 0.00 & 0.00 & 0.00 & 1.00\\
E-NET & 3.537 & 60.42 & 0.04 & 0.04 & 0.0 & 0.96 & 0.994 & 72.66 & 0.00 & 0.00 & 0.00 & 1.00\\
SCAD & 61.240 & 73.34 & 8.66 & 1.00 & 0.0 & 0.00 & 24.683 & 77.20 & 14.00 & 1.00 & 0.00 & 0.00\\
ORACLE & 0.527 & 80.00 & 0.00 & 0.00 & 1.0 & 0.00 & 0.663 & 80.00 & 0.00 & 0.00 & 1.00 & 0.00\\
BRIDGE & 1.106 & 70.70 & 0.08 & 0.04 & 0.1 & 0.86 & 1.554 & 33.56 & 0.42 & 0.18 & 0.00 & 0.82\\
RBRIDGE$^1$ & 0.497 & 80.00 & 0.00 & 0.00 & 1.0 & 0.00 & 0.317 & 80.00 & 0.00 & 0.00 & 1.00 & 0.00\\
RBRIDGE$^2$ & 4.110 & 0.00 & 0.00 & 0.00 & 0.0 & 1.00 & 11.480 & 0.00 & 0.40 & 0.06 & 0.00 & 0.94\\
RBRIDGE$^3$ & 0.094 & 47.76 & 0.00 & 0.00 & 0.0 & 1.00 & 0.107 & 43.12 & 0.00 & 0.00 & 0.00 & 1.00\\
RBRIDGE$^4$ & 0.840 & 46.94 & 0.00 & 0.00 & 0.0 & 1.00 & 1.221 & 46.08 & 0.00 & 0.00 & 0.00 & 1.00\\
\addlinespace
\multicolumn{1}{c}{ } & \multicolumn{6}{c}{$p=100$, $\sigma=3$, $\rho=0.5$} & \multicolumn{6}{c}{$p=100$, $\sigma=3$, $\rho=0.9$} \\
\cmidrule(l{3pt}r{3pt}){2-7} \cmidrule(l{3pt}r{3pt}){8-13}
  & MME & C & IC & U-fit & C-fit & O-fit & MME & C & IC & U-fit & C-fit & O-fit\\
\midrule
LASSO & 21.047 & 63.16 & 1.44 & 0.66 & 0.0 & 0.34 & 7.473 & 71.22 & 3.18 & 0.98 & 0.00 & 0.02\\
RIDGE & 113.484 & 0.00 & 0.00 & 0.00 & 0.0 & 1.00 & 127.367 & 0.00 & 0.00 & 0.00 & 0.00 & 1.00\\
E-NET & 19.890 & 57.24 & 0.76 & 0.44 & 0.0 & 0.56 & 5.884 & 69.00 & 1.10 & 0.56 & 0.00 & 0.44\\
SCAD & 74.389 & 74.30 & 9.70 & 1.00 & 0.0 & 0.00 & 29.122 & 77.12 & 14.20 & 1.00 & 0.00 & 0.00\\
ORACLE & 4.744 & 80.00 & 0.00 & 0.00 & 1.0 & 0.00 & 5.971 & 80.00 & 0.00 & 0.00 & 1.00 & 0.00\\
BRIDGE & 24.677 & 28.18 & 1.52 & 0.40 & 0.0 & 0.60 & 7.863 & 23.84 & 2.00 & 0.32 & 0.00 & 0.68\\
RBRIDGE$^1$ & 3.821 & 80.00 & 0.00 & 0.00 & 1.0 & 0.00 & 1.694 & 80.00 & 0.26 & 0.04 & 0.96 & 0.00\\
RBRIDGE$^2$ & 6.817 & 0.00 & 0.00 & 0.00 & 0.0 & 1.00 & 12.274 & 0.00 & 0.74 & 0.08 & 0.00 & 0.92\\
RBRIDGE$^3$ & 0.506 & 35.00 & 0.00 & 0.00 & 0.0 & 1.00 & 0.531 & 46.00 & 0.00 & 0.00 & 0.00 & 1.00\\
RBRIDGE$^4$ & 1.286 & 46.56 & 0.00 & 0.00 & 0.0 & 1.00 & 1.969 & 38.24 & 0.00 & 0.00 & 0.00 & 1.00\\
\addlinespace
\multicolumn{1}{c}{ } & \multicolumn{6}{c}{$p=200$, $\sigma=1$, $\rho=0.5$} & \multicolumn{6}{c}{$p=200$, $\sigma=1$, $\rho=0.9$} \\
\cmidrule(l{3pt}r{3pt}){2-7} \cmidrule(l{3pt}r{3pt}){8-13}
  & MME & C & IC & U-fit & C-fit & O-fit & MME & C & IC & U-fit & C-fit & O-fit\\
\midrule
LASSO & 43.233 & 162.98 & 4.00 & 0.86 & 0.0 & 0.14 & 2.885 & 171.66 & 0.64 & 0.42 & 0.00 & 0.58\\
RIDGE & 139.543 & 0.00 & 0.00 & 0.00 & 0.0 & 1.00 & 167.377 & 0.00 & 0.00 & 0.00 & 0.00 & 1.00\\
E-NET & 40.517 & 155.14 & 2.46 & 0.76 & 0.0 & 0.24 & 2.061 & 168.50 & 0.08 & 0.08 & 0.00 & 0.92\\
SCAD & 117.997 & 175.56 & 12.10 & 1.00 & 0.0 & 0.00 & 40.314 & 175.20 & 14.98 & 1.00 & 0.00 & 0.00\\
ORACLE & 0.959 & 180.00 & 0.00 & 0.00 & 1.0 & 0.00 & 0.974 & 180.00 & 0.00 & 0.00 & 1.00 & 0.00\\
BRIDGE & 46.456 & 121.66 & 3.34 & 0.58 & 0.0 & 0.42 & 3.577 & 157.72 & 1.40 & 0.58 & 0.00 & 0.42\\
RBRIDGE$^1$ & 0.936 & 180.00 & 0.00 & 0.00 & 1.0 & 0.00 & 0.398 & 180.00 & 0.00 & 0.00 & 1.00 & 0.00\\
RBRIDGE$^2$ & 9.470 & 0.00 & 0.00 & 0.00 & 0.0 & 1.00 & 33.882 & 0.00 & 0.46 & 0.04 & 0.00 & 0.96\\
RBRIDGE$^3$ & 0.311 & 102.98 & 0.00 & 0.00 & 0.0 & 1.00 & 0.140 & 93.24 & 0.00 & 0.00 & 0.00 & 1.00\\
RBRIDGE$^4$ & 1.294 & 66.74 & 0.00 & 0.00 & 0.0 & 1.00 & 1.770 & 114.22 & 0.00 & 0.00 & 0.00 & 1.00\\
\addlinespace
\multicolumn{1}{c}{ } & \multicolumn{6}{c}{$p=200$, $\sigma=3$, $\rho=0.5$} & \multicolumn{6}{c}{$p=200$, $\sigma=3$, $\rho=0.9$} \\
\cmidrule(l{3pt}r{3pt}){2-7} \cmidrule(l{3pt}r{3pt}){8-13}
  & MME & C & IC & U-fit & C-fit & O-fit & MME & C & IC & U-fit & C-fit & O-fit\\
\midrule
LASSO & 56.255 & 163.62 & 5.76 & 0.96 & 0.0 & 0.04 & 13.064 & 167.04 & 4.74 & 1.00 & 0.00 & 0.00\\
RIDGE & 139.681 & 0.00 & 0.00 & 0.00 & 0.0 & 1.00 & 167.472 & 0.00 & 0.00 & 0.00 & 0.00 & 1.00\\
E-NET & 53.906 & 152.80 & 3.62 & 0.90 & 0.0 & 0.10 & 11.059 & 163.66 & 1.84 & 0.70 & 0.00 & 0.30\\
SCAD & 118.643 & 176.00 & 12.78 & 1.00 & 0.0 & 0.00 & 43.988 & 174.40 & 15.36 & 1.00 & 0.00 & 0.00\\
ORACLE & 8.633 & 180.00 & 0.00 & 0.00 & 1.0 & 0.00 & 8.769 & 180.00 & 0.00 & 0.00 & 1.00 & 0.00\\
BRIDGE & 62.173 & 89.22 & 3.14 & 0.52 & 0.0 & 0.48 & 15.952 & 117.10 & 5.38 & 0.66 & 0.00 & 0.34\\
RBRIDGE$^1$ & 5.592 & 180.00 & 0.00 & 0.00 & 1.0 & 0.00 & 1.987 & 180.00 & 0.28 & 0.04 & 0.96 & 0.00\\
RBRIDGE$^2$ & 13.364 & 0.00 & 0.12 & 0.02 & 0.0 & 0.98 & 35.756 & 0.00 & 1.46 & 0.14 & 0.00 & 0.86\\
RBRIDGE$^3$ & 2.503 & 49.80 & 0.00 & 0.00 & 0.0 & 1.00 & 0.433 & 73.88 & 0.00 & 0.00 & 0.00 & 1.00\\
RBRIDGE$^4$ & 3.202 & 50.12 & 0.00 & 0.00 & 0.0 & 1.00 & 2.674 & 66.62 & 0.00 & 0.00 & 0.00 & 1.00\\
\bottomrule
\end{tabular}
\end{adjustbox}
\begin{flushleft}                           
\footnotesize{$^1$the restrictions in case 1, $^2$the restrictions in case 2,
$^3$the restrictions in case 3, $^4$the restrictions in case 4.}
\end{flushleft}
\end{table}

As the noise level increases its performance violates, however, it is still the best amongst. On the other hand, the O-fit of other RBRIDGE estimators are higher than the rest. However, the O-fit decreases as noise level increases. Eventually, as the level of multicollinearity increases the C-fit measure of the proposed estimators decreases, which is compatible with other penalty estimators. In the Figure~\ref{Fig:boxplot1:ex1}, we plot MEs, which the figures outputs agree with the reported analyses. 

\begin{figure}[H]
\centering
\includegraphics[width=16cm,height=16cm]{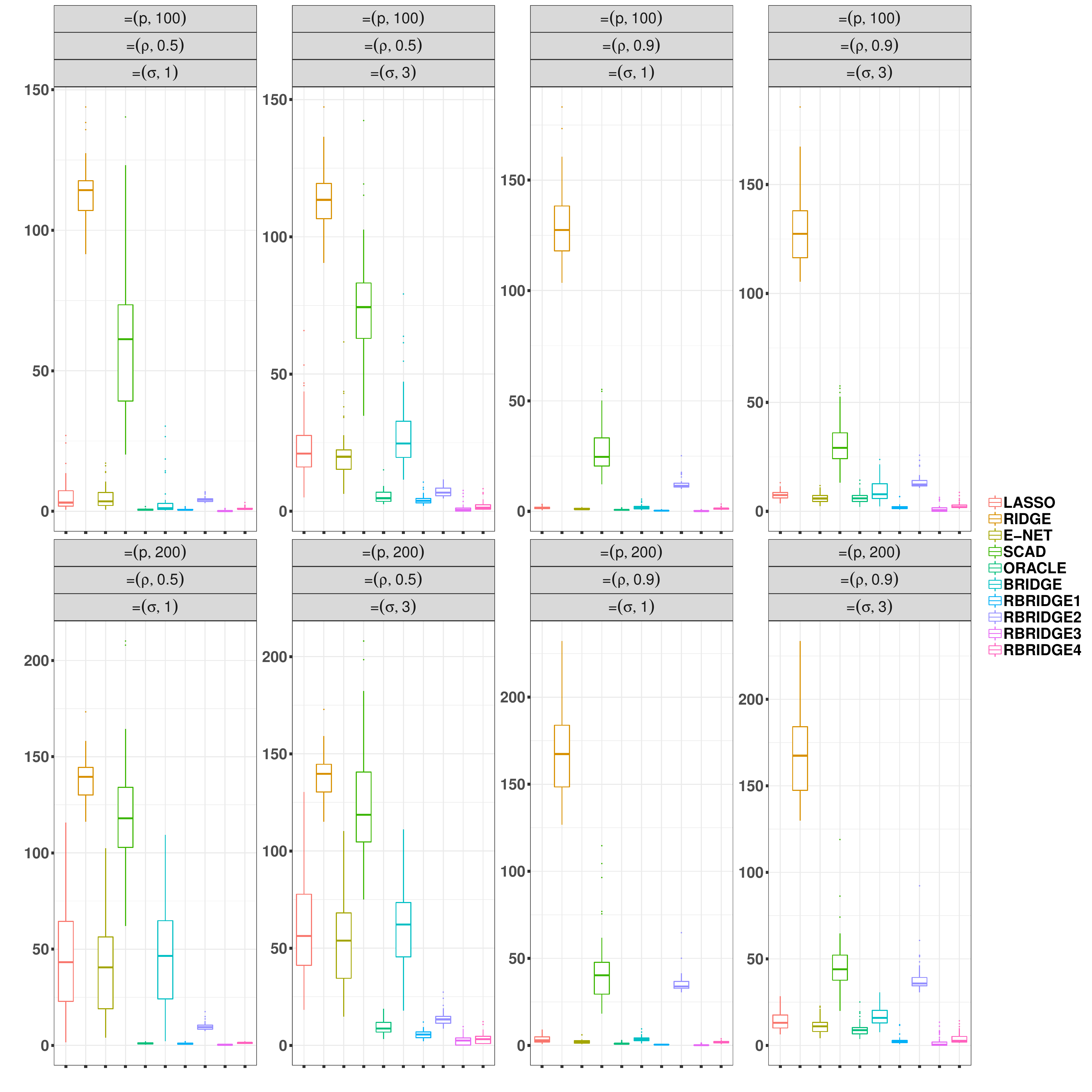}
\caption{Boxplots of the MEs for Example 2.
 \label{Fig:boxplot1:ex2}}
\end{figure}

Similar to Table~\ref{tab:sim:ex1}, Table~\ref{tab:sim:ex2} provides the summary for the high dimensional case in which the number of co-variates exceeds the number of samples. Obviously both ME and MC measures increase in this case, however, as the sample size increases the values decrease that show consistency in estimation. As the noise level increases the performance of the proposed RBRIDGE estimators is as good as the oracle, however, the other penalty estimators perform poorly. To be more specific, it is clearly seen that both the RIDGE and SCAD perform very poor in terms of the ME and MC in most cases. 
This gets much worse if $p$ increases. In terms of estimation 
accuracy the RBRIDGE estimators perform closely to the oracle estimator, whereas our proposed approach gives the smallest MME, and consistently outperforms other penalty estimators.
In terms of variable selection we observe that the well-known penalty estimators does not lead to a sparse model with restriction, whereas, the RBRIDGE estimators successfully select all covariates with nonzero coefficients, but it is obvious that the proposed RBRIDGE$^2$ has slightly strange sparsity rate (i.e., zero Cs) than the other RBRIDGE estimators.  Figure~\ref{Fig:boxplot1:ex2}, we plot MEs; again confirm the performance analysis outlined in above.

\section{Real Data Analyses}
\label{sec:RDA}
We exhaustively consider four real data examples which cover both low dimensional and high dimensional cases. Each data set is freely available online and can be downloaded from the given references. In all applications, we analyze the performance of the respective approaches in the simulation. Before we start analyzing, the response variable is centered and the predictors are standardized. Hence, a constant term is not counted as a parameter. To asses the prediction accuracy of the listed estimators, we randomly split the data into two equal part of observations, the first part is the training set and the other part is for test set. Models are fitted on the training set only. We consider the following measure to asses the performance of the estimators. 
\begin{equation}
\label{mse_y}
\rm MSE_y =\left\| \by_{test}-\bX_{test}\hbbeta \right\|^2    
\end{equation}
We also consider the following measure in the case where a prior information $\bbeta_{0i}$, $i=1,2$, is available. 
 \begin{equation}
\label{mse_beta}
\rm MSE_{\bbeta_{0i}} =\left\| \bbeta_{0i}-\hbbeta \right\|^2, i=1,2.
\end{equation}
For clarity, here $i$ takes two values since we consider two sets of prior information (restrictions) in our numerical analyses. 

This process is repeated $500$ times if $n>p$, otherwise $100$, and the median values are reported. For the ease of comparison, we calculate the $\rm RMSE = \frac{\rm MSE}{\min \rm MSE}$; values larger than $1$ show the estimator performs worse compared to the one that has the minimum MSE.

\subsection{Air Pollution and Mortality Data}

Air pollution has an impact on health and leads to disease or hospitalization if someone has been exposed to excessive air pollution for a long time. Everyone may be exposed to air pollution.

\begin{table}[!htbp]
\centering
\begin{tabular}{ll}
\toprule
\textbf{Variables} & \textbf{Descriptions} \\
\midrule
Mortality(Response)    & Total age-adjusted mortality from \cr
&all causes (Annual deaths per 100,000 people) \cr
Precip      & Mean annual precipitation (inches) \cr
Humidity   &Percent relative humidity (annual average at 1:00pm)\cr
JanTemp    &Mean January temperature (degrees F)\cr
JulyTemp   &Mean July temperature (degrees F)\cr
Over65     &Percentage of the population aged 65 years or over\cr
House      &Population per household\cr
Educ       &Median number of school years completed for persons 25 years or older\cr
Sound      &Percentage of the housing that is sound with all facilities\cr
Density    &Population density (in persons per square mile of urbanized area)\cr
NonWhite   &Percentage of population that is nonwhite\cr
WhiteCol   &Percentage of employment in white collar occupations\cr
Poor       &Percentage of households with annual income under \$3,000 in 1960\cr
HC         &Pollution potential of hydrocarbons\cr
NOX        &Pollution potential of oxides of nitrogen\cr
SO2        &Pollution potential of sulfur dioxide\cr
\bottomrule
\end{tabular}
\caption{Description of the Air Pollution and Mortality Data variables
\label{Tab:variables}}
\end{table}

Hence, it is worth to investigate what kinds of variable effects mortality. In this example, we use the well-known Air Pollution and Mortality Data which is originally analyzed by \cite{mcdonald1973instabilities} and was also analyzed by \cite{soofi1990effects, smucler2017robust, yuzbacsi2019shrinkage}. The data are freely available at \href{http://lib.stat.cmu.edu/datasets/pollution}{http://lib.stat.cmu.edu/datasets/pollution}. This data consists of 60 sets observations of metropolitan statistical areas in the United States in 1960 on 16 variables, where the description of the variables is provided in Table~\ref{Tab:variables}.

To estimate the RBRIDGE, we need to know a piece of prior information about the real data set. This can be adopted from the previous results, e.g., \cite{mcdonald1973instabilities,yuzbacsi2019shrinkage} or an expert's opinion can be taken. However, we used the stepwise regressions here, making use of the functions {\it ols\_step\_forward\_p} and {\it ols\_step\_backward\_p} in {\it olsrr} package which is recently realized in R. Hence, we have two prior information that are reported in Table~\ref{air_d:beta_t}. According to this table, the ``Stepwise Forward Method" finds that seven variables are significantly important while the other method finds that nine variables are significantly important.
\begin{table}[H]
\caption{True parameters based on the Stepwise regression
\label{air_d:beta_t}
}
\centering
\begin{tabular}{llcc}
  \toprule
  && Stepwise Forward Method & Stepwise Backward Method \\ 
 &Variables& $\bbeta_{01}$ & $\bbeta_{02}$ \\ 
  \midrule
$\beta_1$&  Precip & 16.460 & 18.536 \\ 
$\beta_2$&  Humidity & -- & -- \\ 
$\beta_3$&  JanTemp & -19.256 & -23.005 \\ 
$\beta_4$&  JulyTemp & -10.956 & -15.812 \\ 
$\beta_5$&  Over65 & -- & -15.994 \\ 
$\beta_6$&  House & -8.387 & -18.582 \\ 
$\beta_7$&  Educ & -14.338 & -19.796 \\ 
$\beta_8$&  Sound & -- & -- \\ 
$\beta_9$&  Density & -- & -- \\ 
$\beta_{10}$&  NonWhite & 46.530 & 41.590 \\ 
$\beta_{11}$&  WhiteCol & -- & -- \\ 
$\beta_{12}$&  Poor & -- & -- \\ 
$\beta_{13}$&  HC & -- & -84.819 \\ 
$\beta_{14}$&  NOX & -- & 86.698 \\ 
$\beta_{15}$&  SO2 & 14.284 & -- \\  
   \bottomrule
\end{tabular}
\end{table}
In the light of the prior information in Table~\ref{air_d:beta_t}, the restrictions based on $\bbeta_{01}$ and $\bbeta_{02}$ can be respectively expressed as follows:
\begin{eqnarray}
    \bR_1 = 
\begin{bmatrix}
\begin{tabular}{rrrrrrrrrrrrrrrr}
0 & 1 & 0 & 0 & 0 & 0 & 0 & 0 & 0 & 0 & 0 & 0 & 0 & 0 & 0 \\ 
0 & 0 & 0 & 0 & 1 & 0 & 0 & 0 & 0 & 0 & 0 & 0 & 0 & 0 & 0 \\ 
0 & 0 & 0 & 0 & 0 & 0 & 0 & 1 & 0 & 0 & 0 & 0 & 0 & 0 & 0 \\ 
0 & 0 & 0 & 0 & 0 & 0 & 0 & 0 & 1 & 0 & 0 & 0 & 0 & 0 & 0 \\ 
0 & 0 & 0 & 0 & 0 & 0 & 0 & 0 & 0 & 0 & 1 & 0 & 0 & 0 & 0 \\ 
0 & 0 & 0 & 0 & 0 & 0 & 0 & 0 & 0 & 0 & 0 & 1 & 0 & 0 & 0 \\ 
0 & 0 & 0 & 0 & 0 & 0 & 0 & 0 & 0 & 0 & 0 & 0 & 1 & 0 & 0 \\ 
0 & 0 & 0 & 0 & 0 & 0 & 0 & 0 & 0 & 0 & 0 & 0 & 0 & 1 & 0 
\end{tabular}
  \end{bmatrix},
  & 
 \br_1 = \begin{bmatrix}
  0 \\
  0 \\
  0 \\
  0 \\
  0 \\
  0 \\
  0 \\
  0 \\
  \end{bmatrix},
\end{eqnarray}
that is, $\beta_i=0$, $i=2,5,8,9,11,12,13,14$ and
\begin{eqnarray}
    \bR_2 = 
\begin{bmatrix}
\begin{tabular}{rrrrrrrrrrrrrrrr}
0 & 1 & 0 & 0 & 0 & 0 & 0 & 0 & 0 & 0 & 0 & 0 & 0 & 0 & 0 \\ 
0 & 0 & 0 & 0 & 0 & 0 & 0 & 1 & 0 & 0 & 0 & 0 & 0 & 0 & 0 \\ 
0 & 0 & 0 & 0 & 0 & 0 & 0 & 0 & 1 & 0 & 0 & 0 & 0 & 0 & 0 \\ 
0 & 0 & 0 & 0 & 0 & 0 & 0 & 0 & 0 & 0 & 1 & 0 & 0 & 0 & 0 \\ 
0 & 0 & 0 & 0 & 0 & 0 & 0 & 0 & 0 & 0 & 0 & 1 & 0 & 0 & 0 \\ 
0 & 0 & 0 & 0 & 0 & 0 & 0 & 0 & 0 & 0 & 0 & 0 & 0 & 0 & 1 
\end{tabular}
  \end{bmatrix},
  &  
  \br_2 = \begin{bmatrix}
  0 \\
  0 \\
  0 \\
  0 \\
  0 \\
  0 \\
  \end{bmatrix},
\end{eqnarray}
that is, $\beta_i=0$, $i=2,8,9,11,12,15$.
The results of analysis are reported in Table~\ref{tab:real:mse}. We expect that the RBRIDGE estimators based on the prior information perform well compared to the other penalty estimators in terms of the measure of ${\rm MSE_{\beta_0}}$. The RBRIDGE$^1$ has an improvement $3.375$ times more than the BRIDGE and is at least $2.75$ times better than the others since it is estimated by using the restrictions based on the true parameter $\bbeta_{01}$. By looking at $\rm MSE_{\beta_{02}}$, the RBRIDGE$^2$ has an improvement $1.759$ times more than the BRIDGE and is at least $1.67$ times better than the others since it is constructed based on $\bbeta_{02}$. On the other hand, it can be also seen that the  RBRIDGE$^1$ outshines the others since it has the lowest prediction error, which is shown at ${\rm MSE_y}$, even better than the RBRIDGE$^2$. In the last column of Table~\ref{tab:real:mse} we also report the median of the number selected variables throughout replications.

\begin{table}[H]
\centering
\caption{Performance analysis of the estimators for the Air Pollution and Mortality Data}
\label{tab:real:mse}
\begin{tabular}{lrrrrrrr}
  \toprule
 & $\rm MSE_y$ & $\rm RMSE_y$ & $\rm MSE_{\beta_{01}}$ & $\rm RMSE_{\beta_{01}}$ & $\rm MSE_{\beta_{02}}$ & $\rm RMSE_{\beta_{02}}$ & \# variable  \\ 
  \midrule
LASSO & 46745.635 & 1.213 & 1038.526 & 2.756 & 16503.929 & 1.747 & 8 \\ 
  RIDGE & 47758.527 & 1.239 & 1414.969 & 3.755 & 15814.735 & 1.674 & 15 \\ 
  E-NET & 46344.879 & 1.203 & 1118.269 & 2.968 & 16499.733 & 1.746 & 9 \\ 
  SCAD & 53071.400 & 1.377 & 1169.844 & 3.104 & 16595.772 & 1.756 & 6 \\ 
  BRIDGE & 49280.423 & 1.279 & 1271.908 & 3.375 & 16621.214 & 1.759 & 7 \\ 
  RBRIDGE$^1$ & 38534.576 & \bf 1.000 & 376.837 & \bf 1.000 & 15846.628 & 1.677 & 7 \\ 
  RBRIDGE$^2$ & 46518.768 & 1.207 & 4447.006 & 11.801 & 9448.353 & \bf 1.000 & 9 \\ 
   \bottomrule
\end{tabular}
\begin{flushleft}                           
\footnotesize{$^1$the restrictions based on Stepwise Forward, $^2$the restrictions based on Stepwise Backward}
\end{flushleft}
\end{table}

In Table~\ref{tab:est}, we report the estimates along with the tuning parameters used in the proposed penalty estimators to better understanding of methods.
\begin{table}[H]
\centering
\caption{Estimated coefficients of Air Pollution and Mortality Data}
\label{tab:est}
\begin{adjustbox}{width=1\textwidth}
\begin{tabular}{llrrrrrrr}
  \toprule
 && LASSO & RIDGE & E-NET & SCAD & BRIDGE & RBRIDGE$^1$ & RBRIDGE$^2$ \\ 
 && $\lambda=2.023$
 & $\lambda=8.358$
 & $\lambda=2.168$ 
  & $\lambda=2.272$ 
 & $\lambda=373.815$ 
 & $\lambda=0.318$ 
  & $\lambda=15.883$  \\ 
 &&&& $\alpha= 0.850$ & & $q=0.800$ &  $q=0.050$ & $q=1.250$ \\
  \midrule
$\hbeta_1$& Precip & 15.037 & 16.168 & 15.198 & 16.869 & 15.805 & 16.460 & 16.498 \\ 
$\hbeta_2$&  Humidity & -- & 1.894 & 0.047 & -- & -- & -- & -- \\ 
$\hbeta_3$&  JanTemp & -12.101 & -12.028 & -12.077 & -18.120 & -13.551 & -19.256 & -22.965 \\ 
$\hbeta_4$&  JulyTemp & -6.000 & -6.815 & -6.257 & -11.029 & -7.013 & -10.956 & -13.347 \\ 
$\hbeta_5$&  Over65 & -- & -4.123 & -- & -- & -- & -- & -9.866 \\ 
$\hbeta_6$&  House & -- & -2.190 & -- & -6.036 & -- & -8.387 & -14.834 \\ 
$\hbeta_7$&  Educ & -8.786 & -7.303 & -8.498 & -13.269 & -10.156 & -14.338 & -19.485 \\ 
$\hbeta_8$&  Sound & -2.658 & -5.964 & -3.010 & -- & -- & -- & -- \\ 
$\hbeta_9$&  Density & 5.281 & 7.527 & 5.510 & 1.606 & 4.626 & -- & -- \\ 
$\hbeta_{10}$&  NonWhite & 35.548 & 29.234 & 35.519 & 45.147 & 38.156 & 46.530 & 44.580 \\ 
$\hbeta_{11}$&  WhiteCol & -0.066 & -2.243 & -0.281 & -- & -- & -- & -- \\ 
$\hbeta_{12}$&  Poor & -- & 2.203 & -- & -- & -- & -- & -- \\ 
$\hbeta_{13}$&  HC & -- & -3.583 & -- & -- & -- & -- & -47.870 \\ 
$\hbeta_{14}$&  NOX & -- & 3.276 & -- & -- & -- & -- & 50.958 \\ 
$\hbeta_{15}$&  SO2 & 14.471 & 15.255 & 14.574 & 14.231 & 14.090 & 14.284 & -- \\
   \bottomrule
\end{tabular}
\end{adjustbox}
\begin{flushleft}                           
\footnotesize{$^1$the restrictions based on Stepwise Forward, $^2$the restrictions based on Stepwise Backward}
\end{flushleft}
\end{table}

Finally, the 3D plot of the cross validation errors (CVE) of the RBRIDGE estimator versus $q$ and $log(\lambda)$ is plotted.
\begin{figure}[H]
\centering
\includegraphics[width=16cm,height=8cm]{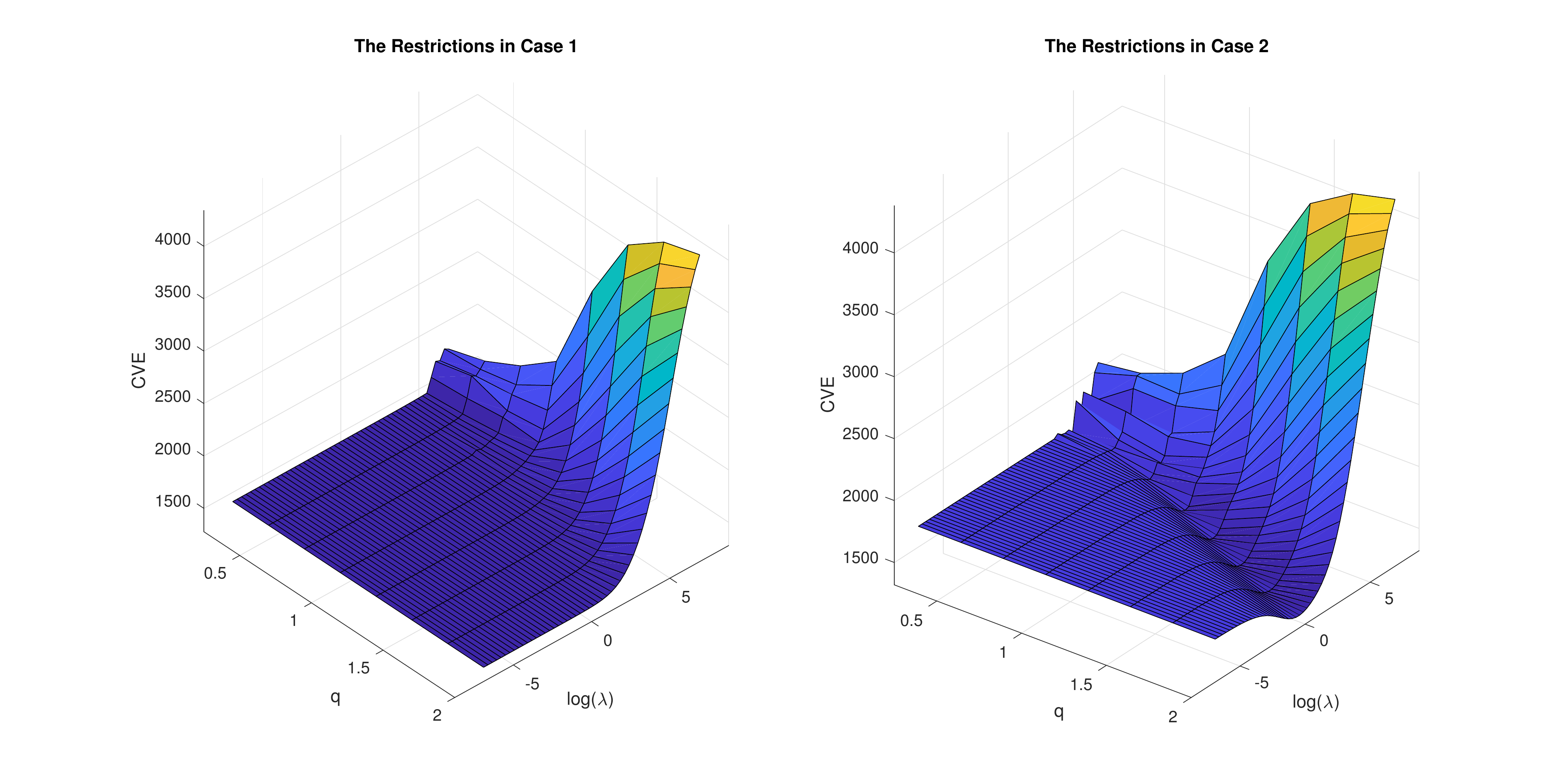}
\caption{The 3D plot of the CVE of the RBRIDGE estimator versus $q$ and $log(\lambda)$ for Air Pollution and Mortality Data  under the restrictions Stepwise Forward and Backward, respectively.
 \label{air_d:3d}}
\end{figure}

\subsection{Gorman--Toman Data}
A Ten-Factor data set first described by \cite{gorman1966selection} and used by several authors. c.f.,  \cite{hocking1967selection,hoerl1970ridge,gunst1976comparison,ozkale2014relative}. One may freely obtain this data from the {\it ridge} package in R, \cite{cule2019ridge}. The data set has $36$ observations, which shows one day of operation of a petroleum refining unit, on $10$ independent variables and one dependent variable. We consider three different scenarios on the restrictions following the explanations of \cite{ozkale2014relative}. 
\begin{itemize}[align=left]
\item[Case 1] \cite{gunst1976comparison} conducted that there exists multicollinearity among the variables of $X_1.X_5$ and $X_6$. \cite{ozkale2014relative} identified the restriction $\beta_1+\beta_5+\beta_6 = 0$. This restriction can be expressed as
$\bR_1 = \begin{bmatrix} 
 1 & 0 & 0 & 0 & 1 & 1 & 0 & 0 & 0 & 0
  \end{bmatrix}$
and $\br_1 = 0$.

\item[Case 2] Based on the $\rm C_p$ statistic, \cite{hoerl1970ridge} demonstrated that the first, fourth, ninth and tenth explanatory variables may be ignored since they are not significantly important. Hence, we have the restriction $\beta_1 = 0$, $\beta_4 = 0$, $\beta_9 = 0$ and $\beta_{10} = 0$ which yields
$\bR_2 = \begin{bmatrix}
\begin{tabular}{rrrrrrrrrr}
1 & 0 & 0 & 0 & 0 & 0 & 0 & 0 & 0 & 0 \\ 
  0 & 0 & 0 & 1 & 0 & 0 & 0 & 0 & 0 & 0 \\ 
  0 & 0 & 0 & 0 & 0 & 0 & 0 & 0 & 1 & 0 \\ 
  0 & 0 & 0 & 0 & 0 & 0 & 0 & 0 & 0 & 1 \\ 
\end{tabular}
  \end{bmatrix}$
  and $ \br_2 = \begin{bmatrix}
  0 \\
  0 \\
  0 \\
  0 \\
  \end{bmatrix}$.

\item[Case 3] Finally, following \cite{hocking1967selection}, the elements of the restriction are given by \linebreak $\bR_3 = \begin{bmatrix}
\begin{tabular}{rrrrrrrrrr}
0 & 0 & 0 & 0 & 1 & 0 & 0 & 0 & 0 & 0 \\ 
  0 & 0 & 0 & 0 & 0 & 1 & 0 & 0 & 0 & 0 \\ 
  0 & 0 & 0 & 0 & 0 & 0 & 1 & 0 & 0 & 0 \\ 
  0 & 0 & 0 & 0 & 0 & 0 & 0 & 1 & 0 & 0 \\ 
  0 & 0 & 0 & 0 & 0 & 0 & 0 & 0 & 1 & 0 \\ 
  0 & 0 & 0 & 0 & 0 & 0 & 0 & 0 & 0 & 1 \\ 
\end{tabular}
  \end{bmatrix}$
  and $ \br_3 = \begin{bmatrix}
  0 \\
  0 \\
  0 \\
  0 \\
  0 \\
  0 \\
  \end{bmatrix}$, that is, the first four variable are significantly important and the rests are considered as nuisance parameters meaning  $\beta_i=0, i=5,\dots,10$.
\end{itemize}

To evaluate the performance of the listed estimators, we only use the prediction error defined by \eqref{mse_y}. We report the results in Table~\ref{gorman_mse}. It can be seen that the RBRIDGE$^2$ outperforms the others. Also, the RBRIDGE estimators based on the restrictions in cases 1 \& 2 are superior compared to the BRIDGE estimator. Again, the last row in Table~\ref{gorman_mse}, we report the median of the number of selected variables throughout replications. 
\begin{table}[H]
\caption{Performance analysis of the estimators for the Gorman--Toman Data}
\label{gorman_mse}
\begin{adjustbox}{width=1\textwidth}
\centering
\begin{tabular}{rrrrrrrrr}
  \toprule
 & LASSO & RIDGE & E-NET & SCAD & BRIDGE & RBRIDGE$^1$ & RBRIDGE$^2$ & RBRIDGE$^3$ \\
 \midrule
    $\rm MSE_y$ & 0.45 & 0.38 & 0.44 & 0.56 & 0.44 & 0.40 & 0.34 & 0.54 \\ 
$\rm RMSE_y$   & 1.31 & 1.12 & 1.30 & 1.63 & 1.28 & 1.17 & \bf 1.00 & 1.58 \\ 
 \# variable & 9.00 & 10.00 & 9.00 & 7.00 & 10.00 & 10.00 & 6.00 & 3.00 \\ 
   \bottomrule
\end{tabular}
\end{adjustbox}
\begin{flushleft}                           
\footnotesize{$^1$the restriction in case 1, $^2$the restriction in case 2,
$^3$the restriction in case 3}
\end{flushleft}
\end{table}

\subsection{Lu2004 Gene Data}
\label{ssec:lu2004}
This data comes from a gene-expression study investigating the relation
of aging and gene expression in the human frontal cortex \cite{lu2004gene}, and it is available at \href{https://www.ncbi.nlm.nih.gov/geo/query/acc.cgi?acc= GSE1572}{https://www.ncbi.nlm.nih.gov/geo/query/acc.cgi?acc= GSE1572}. In the raw data, there are $n=30$ patients whose age are from $26$ to $106$ years, and the expression of $p = 12,625$ genes was measured by microarray technology. We use the data following prescreening and preprocessing of \cite{zuber2011high}, the selected 403 genes and response variable may freely be obtained from the {\it care} package in R, \cite{zuber2014care}. Here we do not have a piece of prior information regarding this data, and one may use a penalty estimation method to identify important variables. To this end, we consider three cases for restrictions:

\begin{itemize}[align=left]
\item[Case 1] We first apply the LASSO. It identifies $\bbeta_1$ as significantly important coefficients, while $\bbeta_2$ is the non-important coefficients vector which those are not expected contribute to the estimating of the response. Hence, we consider $\bR=[\bzero,\bI]$, where $\bzero$ and $\bI$ are suitable sizes zero and identity matrices, respectively, such that $\bbeta_2 = \br = {\bf 1}_{nz}^{\top}\cdot 0$, where $nz$ is the number of zeros for each method.
\item[Case 2] Just like Case 1, except in this case we apply the SCAD as the variable selection method.
\item[Case 3] Just like Case 1, except the E-NET is applied as the variable selection method.
\end{itemize}

\begin{table}[H]
\caption{Performance analysis of the  estimators for the Lu2004 Gene Data
\label{lu2004gene_mse}}
\centering
\begin{tabular}{lrrrr}
  \toprule
 & $\rm MSE_y$ & $\rm RMSE_y$ & $\rm RMSE^*_y$ & \# variables \\ 
  \midrule
LASSO & 3613.476 & 11.694 & -- & 15 \tikzmark[xshift=0.1em]{a}\\ 
  RIDGE & 2316.688 & 7.497 & -- & 403 \tikzmark[xshift=0.1em]{b}\\ 
  E-NET & 3180.204 & 10.292 & -- & 27 \tikzmark[xshift=0.1em]{c}\\ 
  SCAD & 4626.370 & 14.972 & -- & 7 \tikzmark[xshift=0.1em]{d}\\ 
  BRIDGE & 2981.028 & 9.647 & -- & 388 \tikzmark[xshift=0.1em]{e}\\ 
  RBRIDGE$^1$ & 372.137 & 1.204 & 9.710 & 26 \tikzmark[xshift=0.1em]{f}\\ 
  RBRIDGE$^2$ & 1612.334 & 5.218 & 2.869 & 9 \tikzmark[xshift=0.1em]{g}\\ 
  RBRIDGE$^3$ & 308.995 & \bf 1.000 & 10.292 & 27 \tikzmark[xshift=0.1em]{h}\\ 
   \bottomrule
\end{tabular}
\drawcurvedarrow[bend left=90,-stealth]{a}{f}
\drawcurvedarrow[bend left=90,-stealth]{d}{g}
\drawcurvedarrow[bend left=90,-stealth]{c}{h}
\begin{flushleft}                           
\footnotesize{$^1$the restriction in case 1, $^2$the restriction in case 2,
$^3$the restriction in case 3. $\rm RMSE^*_y$ stands for the relative performance of the LASSO vs RBRIDGE$^1$, SCAD vs RBRIDGE$^2$ and E-NET vs RBRIDGE$^3$, respectively. As an instance, the RBRIDGE reduced the prediction error $9.71$ times if the restriction is selected by LASSO.}
\end{flushleft}
\end{table}

As it can be seen from Table~\ref{lu2004gene_mse}, the RBRIDGE$^3$ outperforms the others, that is, it improves the performance in prediction error sense when it uses the prior information provided by the E-NET for the restriction. We also note that the RBRIDGE$^3$ has an improvement $10.292$ times compared to the E-NET itself; see the column of $\rm RMSE^*_y$. If a piece of prior information is used by LASSO or SCAD, our suggested method has an impressive improvement. The last column of the Table~\ref{lu2004gene_mse} shows the median values of the selected important variables after 100 replications.

\subsection{Eye Data}
This data is extracted from the study of \cite{scheetz2006regulation}, and it is originally available at \href{https://www.ncbi.nlm.nih.gov/geo/query/acc.cgi?acc= GSE5680}{https://www.ncbi.nlm.nih.gov/geo/query/acc.cgi?acc= GSE5680}. In the raw data, there are $n=120$ laboratory rats (Rattus norvegicus) to gain a broad perspective of gene regulation in the mammalian eye and to identify genetic variation relevant to human eye disease. There are over $31,000$ gene probes represented on an Affymetrix expression microarray. Following \cite{li2015flare}, we use $200$ gene probes in order to estimate the expression of the TRIM32 gene as a response. This data may freely be obtained from the {\it flare} package; see \cite{li2015flare}. We follow exactly the same structure as in Cases 1 -- 3 in Section~\ref{ssec:lu2004} to formulate restrictions as prior information. In Table~\ref{eye_mse}, we report the analysis results. For this example, it can be understood that all RBRIDGE estimators outperform the penalty counterparts, and the RBRIDGE$^1$, which is estimated by using the preliminary information obtained from the LASSO has the best performance among all.

\begin{table}[H]
\caption{Perforamance analysis of the estimators for the Eye Data
\label{eye_mse}}
\centering
\begin{tabular}{lrrrr}
  \toprule
 & $\rm MSE_y$ & $\rm RMSE_y$ & $\rm RMSE^*_y$ & \# variables \\ 
  \midrule
LASSO & 0.393 & 1.531 & -- & 19 \tikzmark[xshift=0.1em]{a}\\ 
  RIDGE & 0.362 & 1.411 & -- & 200 \tikzmark[xshift=0.1em]{b} \\ 
  E-NET & 0.385 & 1.499 & -- & 28 \tikzmark[xshift=0.1em]{c} \\ 
  SCAD & 0.478 & 1.859 & -- & 9 \tikzmark[xshift=0.1em]{d} \\ 
  BRIDGE & 0.417 & 1.625 & -- & 200 \tikzmark[xshift=0.1em]{e} \\ 
  RBRIDGE$^1$ & 0.257 & \bf 1.000 & 1.531 & 24 \tikzmark[xshift=0.1em]{f} \\ 
  RBRIDGE$^2$ & 0.278 & 1.083 & 1.716 & 10 \tikzmark[xshift=0.1em]{g} \\ 
  RBRIDGE$^3$ & 0.282 & 1.099 & 1.363 & 33 \tikzmark[xshift=0.1em]{h} \\ 
   \bottomrule
\end{tabular}
\drawcurvedarrow[bend left=90,-stealth]{a}{f}
\drawcurvedarrow[bend left=90,-stealth]{d}{g}
\drawcurvedarrow[bend left=90,-stealth]{c}{h}
\begin{flushleft}                           
\footnotesize{$^1$the restriction in case 1, $^2$the restriction in case 2,
$^3$the restriction in case 3. $\rm RMSE^*_y$ stands for the relative performance of the LASSO vs RBRIDGE$^1$, SCAD vs RBRIDGE$^2$ and E-NET vs RBRIDGE$^3$, respectively.}
\end{flushleft}
\end{table}

\section{Conclusions}
\label{sec:conc}
We used the local quadratic approximation  (LQA) tio obtain a closed-form restricted BRIDGE (RBRIDGE) estimator. 
We studied the low dimensional properties of the proposed estimator and compared its performance numerically with some well-known penalty estimators. Using an extensive simulation study and by analyzing four real data sets we demonstrated the superiority of the proposed RBRIDGE estimator in the sense of better model accuracy and variable selection, under restriction.
One interesting result is that the number of important co-variates between the restriction matrix and the estimation of the BRIDGE estimator may differ since the $q$-norm penalty may select variables. In this case, the results show that the RBRIDGE estimator has better performance, according to the given measures.
Overall, the observations from numerical studies suggest that the proposed RBRIDGEs perform well in estimation accuracy and model selection when the are some linear restrictions present in the study. 

For $0<q\leq1$, \cite{hunter2005variable} showed that the LQA is a special case of a minorization-maximization (MM) algorithm and guarantees the ascent property of maximization problems; and proposed a perturbed LQA. On the other hand, the local linear approximation (LLA) of \cite{zou2008one} enjoys three significant advantages over the local quadratic approximation  (LQA) and the perturbed LQA. For further research, our results can be further investigated using the MM and LLA methods.

\section*{Appendix}
Here we sketch the proofs of theoretical results.
\begin{proof}[Proof of Theorem \ref{theorem:restricted bridge}]
Assume a local point $\bbeta^o$. Using the LQA of Fan and Li, the purpose is to solve the following optimization problem
\begin{eqnarray*}
&&\min_{\bbeta\in\mathbb R^p} (\bY-\bX_n\bm\beta)^\top(\bY-\bX_n\bm\beta)\cr
\textnormal{s.t.}&& \bbeta^\top\diag\left( \left| \hat\beta_1^{o} \right|^{q-2},\dots,\left| \hat\beta_p^{o} \right|^{q-2}  \right)\bbeta^\top\leq t,\;t>0\quad\mbox{and} \quad\bR\bm\beta=\br
\end{eqnarray*}
Using dual representation, we minimize the following objective function
\begin{equation*}
    S_{\mathrm{Pen}}(\bm\beta)=(\bY-\bX_n\bm\beta)^\top(\bY-\bX_n\bm\beta)+\bbeta^\top\bSigma_{\lambda}(\hbbeta^o)\bbeta^\top+\bm\gamma^\top(\bR\bm\beta-\br),
\end{equation*}
where $\bm\gamma=(\gamma_1,\ldots,\gamma_q)^\top$ is the Lagrangian vector of multipliers. Extending the terms and differentiating w.r.t $\bm\beta$, after some modifications and using \eqref{eq-2.3}, we can get
\begin{eqnarray}\label{eq-6.1}
    \bm\beta&=&\left(\bX^\top\bX+\bSigma_{\lambda}(\hbbeta^o)\right)^{-1}\left(\bX_n^\top\bY+\bR^\top\bm\gamma\right)\cr
    &=&\hbbeta_n+\left(\bX^\top\bX+\bSigma_{\lambda}(\hbbeta^o)\right)^{-1}\bR^\top\bm\gamma.
\end{eqnarray}
Applying $\bR\bm\beta=\br$ to the RHS of \eqref{eq-6.1}, we have
\begin{equation*}
    \bR\hbbeta_n+\bR\left(\bX^\top\bX+\bSigma_{\lambda}(\hbbeta^o)\right)^{-1}\bR^\top\bm\gamma=\br
\end{equation*}
that we can obtain
\begin{equation*}
    \bm\gamma=\left[\bR\left(\bX^\top\bX+\bSigma_{\lambda}(\hbbeta^o)\right)^{-1}\bR^\top\right]^{-1}\left(\br-\bR\hbbeta_n\right).
\end{equation*}
Substituting $\bm\gamma$ in \eqref{eq-6.1} gives the required result. 
\end{proof}
\begin{proof}[Proof of Theorem \ref{theorem:MSE}]
We may write
\begin{eqnarray*}
\hat\bbeta_n^{\mathrm{R}}&=&\bM\bS\hat\bbeta_n+\bS^{-1}\bR^\top(\bR\bS^{-1}\bR^\top)^{-1}\br\cr
&=&\bM\bS\hat\bbeta_n+\bS^{-1}\bR^\top(\bR\bS^{-1}\bR^\top)^{-1}\bR\bbeta\cr
&=&\bM\bS\hat\bbeta_n-\bM\bS\bbeta+\bbeta
\end{eqnarray*}
Hence
\begin{eqnarray*}
E(\hat\bbeta_n^R)&=&\bM\bS\bS^{-1}\bC_n\bbeta-\bM\bS\bbeta+\bbeta\cr
&=&-\bM\bSigma_\lambda(\hbbeta^o)\bbeta+\bbeta.
\end{eqnarray*}
On the other hand, simple algebra yields $\mathrm{Cov}(\hat\bbeta_n^R)=\bM\bC_n\bM=\bM\bS\bM$. Hence, we get
\begin{eqnarray*}
\mathrm{MSE}(\hat\bbeta_n^R)&=&\mathrm{tr}\mathrm{Cov}(\hat\bbeta_n^R)+\mathrm{Bias}(\hat\bbeta_n^R)^\top\mathrm{Bias}(\hat\bbeta_n^R)\cr
&=&\mathrm{tr}(\bM\bS\bM)+\mathrm{tr}(\bM\bSigma_\lambda(\hbbeta^o)\bbeta^\top\bbeta\bSigma_\lambda(\hbbeta^o)\bM)\cr
&=&\mathrm{tr}(\bM[\bS+\bSigma_\lambda(\hbbeta^o)\bbeta^\top\bbeta\bSigma_\lambda(\hbbeta^o)]\bM)
\end{eqnarray*}
\end{proof}
\begin{proof}[Proof of Theorem \ref{theorem:Consistency}]
Let
\begin{equation*}
\dot{\bbeta}_n=\argmin_{\bbeta}\left\{\frac1n\sum_{i=1}^{n}(Y_i-\bm x^\top\bm\beta)^2+\frac{\lambda_n}{n}\sum_{j=1}^{p}|\beta_j|^q\right\}
\end{equation*}
Using Theorem 1 of \cite{knight2000asymptotics}, if $\lambda_n=o(n)$, then $\dot{\bbeta}_n\overset{\mathcal{P}}{\to}\bbeta$. Under the regularity condition (ii), $\bS_n\to\bC$. Hence, we get $\hat\bbeta_n^{\mathrm{R}}\overset{\mathcal{P}}{\to}\bbeta-\bC^{-1}\bR^\top(\bR\bC^{-1}\bR^\top)^{-1}(\bR\bbeta-\br)$. Then, the desired result follows unsder the sub-space restriction $\bR\bbeta=\br$.
\end{proof}

\bibliography{refs}
\bibliographystyle{apalike}

\end{document}